\documentclass[11pt]{amsart}
\usepackage{amssymb}
\usepackage{verbatim}
\usepackage{amscd}
\usepackage{amsmath}


\tolerance=500000

\theoremstyle{plain}
\newtheorem{theorem}{Theorem}[section]
\newtheorem{corollary}[theorem]{Corollary}
\newtheorem{lemma}[theorem]{Lemma}
\newtheorem{proposition}[theorem]{Proposition}

\newtheorem{remark}[theorem]{Remark}

\numberwithin{equation}{section}

\def\boxit#1{\vbox{\hrule\hbox{\vrule\kern3pt
     \vbox{\kern3pt#1\kern3pt}\kern3pt\vrule}\hrule}}

\newfont{\msam}{msam10}            
\newfont{\msym}{msbm10 scaled\magstep1}            

\newfont{\gotic}{eufm10 scaled\magstep1}

\newcommand{\ra}{\rightarrow}
\newcommand{\lra}{\mbox{\Huge $\longrightarrow$}}

\newcommand{\GL}{{\rm GL}}

\newcommand{\kra}{\kern-7pt\rightarrow\kern-7pt}

\def\bfr{\mathbf{r}}

\def\bbr{{\mathbb R}}

\def\bbn{{\mathbb N}}
\def\bbc{{\mathbb C}}

\def\ra{\rightarrow}

\def\x{\times}

\def\gl{\mathrm{GL}}

\def\inv{^{-1}}

\def\lra{\longrightarrow}

\def\fm{\phantom{-}}
\numberwithin{equation}{section}

\newcommand{\angles}[1]{{\langle #1 \rangle}}
\def\begtab{\begin{tabbing} WW\=23/02: \= point 1\kill}

\def\NIL1{{\mathcal H^{3}}}

\def\wt{\widetilde}

\def\n2{\mathfrak{N}_2}

\def\frakm{{\mathfrak m}}
\def\fraku{{\mathfrak u}}
\def\frakh{{\mathfrak h}}
\def\frakg{{\mathfrak g}}

\setcounter{secnumdepth}{2}
\begin{document}

\title[ Area and holonomy  on   the principal $U(n)$ bundles ]{  The topological aspect of the holonomy displacement   on   the principal $U(n)$ bundles   over Grassmannian manifolds  }

\author{Taechang Byun}
\address{Department of Mathematics and Statistics, Sejong University,
Seoul 143-747, Korea}
\email{tcbyun@gmail.com}
\author{Younggi Choi}
\address{ Department of Mathematics Education,
Seoul National University,
Seoul 151-748, Korea}
\email{yochoi@snu.ac.kr}



\maketitle

\begin{abstract}
Consider the principal $U(n)$ bundles over Grassmann manifolds
$U(n)\ra U(n+m)/U(m) \stackrel{\pi}\ra G_{n,m}$.
Given $X \in U_{m,n}(\bbc)$ and a 2-dimensional subspace 
$\frakm' \subset \frakm $ $ \subset \mathfrak{u}(m+n), $ 
assume either 
$\frakm'$ is induced by $X,Y \in U_{m,n}(\bbc)$
with $X^{*}Y = \mu I_n$ for some $\mu \in \bbr$
or
by $X,iX \in U_{m,n}(\bbc)$.  
Then $\frakm'$  gives rise to a complete totally geodesic surface $S$ in the base
space. Furthermore,
let $\gamma$
be a piecewise smooth, simple closed curve on $S$
parametrized by $0\leq t\leq 1$, and $\wt\gamma$ its horizontal lift on
the bundle
$U(n) \ra \pi^{-1}(S) \stackrel{\pi}{\rightarrow} S,$
which is immersed in
$U(n) \ra U(n+m)/U(m) \stackrel{\pi}\ra G_{n,m} $.
Then
$$
\wt\gamma(1)= \wt\gamma(0) \cdot ( e^{i \theta} I_n)
\text{\hskip24pt or\hskip12pt } \wt\gamma(1)= \wt\gamma(0),
$$
depending on whether the immersed bundle is flat or not,
where $A(\gamma)$ is the area of the region on the surface $S$
surrounded by $\gamma$ and 
$\theta= 2  \cdot \tfrac{n+m}{2n} A(\gamma).$
\end{abstract}

\section{Introduction}
For two natural numbers $n,m \in \bbn,$ let
$$
  U_{m,n}(\bbc):= 
  \{ 
     X \in M_{m,n}(\bbc) 
     \,\, | \,\, 
     X^{*} X = \lambda I_n \text{ for some } \lambda \in \bbc - \{0\}
  \},
$$
which may be regarded as a generalization of a unitary group. 
It plays an important role in studying the princiapl $U(n)$ bundles $U(n) \ra U(n+m)/U(m) \ra G_{n,m}$ over Grassmannian manifolds, 
where, for $k \in \bbn,$ $U(k)$ has a metric, related to the Killing-Cartan form, given by
$$
  \langle A,B \rangle 
   = \tfrac{1}{k} \text{Re}\big(\text{Tr}(A^{*}B)\big), 
      \qquad A,B \in \mathfrak{u}(k),
$$
and each quotient space has the induced metric which makes the projection a riemannian submersion.

Consider the Hopf fibration $S^1\ra S^3\ra S^2$. Let $\gamma$ be a
simple closed curve on $S^2$. Pick a point in $S^3$ over
$\gamma(0)$, and take the unique horizontal lift $\wt\gamma$ of
$\gamma$. Since $\gamma(1)=\gamma(0)$, $\wt\gamma(1)$ lies in the
same fiber as $\wt\gamma(0)$ does. We are interested in
understanding the difference between $\wt\gamma(0)$ and
$\wt\gamma(1)$. The following equality was already known
\cite{Pin}:
 $$
V(\gamma)=e^{\frac{1}{2}  A(\gamma) i},
$$
where $V(\gamma)$ is  the holonomy
displacement along $\gamma$, and
$A(\gamma)$ is the area of the region surrounded by $\gamma$.

In this paper, we  generalize this fact to the following higher dimensional Stiefel bundle over the Grassmannian manifold through $U_{m,n}(\bbc)$
$$
   U(n) \ra U(n+m)/U(m) \stackrel{\pi}\ra G_{n,m},
$$
where $G_{n,m}=U(n+m)/\big(U(n) \times U(m)\big).$
\noindent
The main results are stated as follows:
For $\hat{X} \in {\mathfrak u}(n+m)$ which is induced by
$X \in U_{m,n}(\bbc)$, 
consider a 2-dimensional subspace 
$\frakm' \subset \frakm  $ $ \subset \mathfrak{u}(m+n) $ 
with $\hat{X} \in \frakm'.$
Assume either 
$$\frakm' = \text{Span}_{\bbr}\{\hat{X},\hat{Y}\}$$
for some $Y \in U_{m,n}$ with $X^{*}Y = \mu I_n$ for some $\mu \in \bbr$
or
$$\frakm' = \text{Span}_{\bbr} \{\hat{X},\widehat{iX}\}.$$
Then $\frakm'$  gives rise to a complete totally geodesic surface $S$ in the base
space. Furthermore,
let $\gamma$
be a piecewise smooth, simple closed curve on $S$
parametrized by $0\leq t\leq 1$, and $\wt\gamma$ its horizontal lift on
the bundle
$U(n) \ra \pi^{-1}(S) \stackrel{\pi}{\rightarrow} S,$
which is immersed in
$U(n) \ra U(n+m)/U(m) \stackrel{\pi}\ra G_{n,m} $.
Then
$$
\wt\gamma(1)= \wt\gamma(0) \cdot ( e^{i \theta} I_n)
\text{\hskip24pt or\hskip12pt } \wt\gamma(1)= \wt\gamma(0),
$$
depending on whether the immersed bundle is flat or not,
where $A(\gamma)$ is the area of the region on the surface $S$
surrounded by $\gamma$ and 
$\theta= 2 \cdot \tfrac{n+m}{2n} A(\gamma).$
See {\rm Theorem  \ref{thm-sphere}}.

\section{
            The bundle
             $
                S^1 \ra SU(2) \ra \bbc P^1  
             $ 
           } \label{hopf}

It will be studied not only the holonomy displacement of the bundle 
$S^1 \ra SU(2) \ra \bbc P^1$ 
but also its isomorphic equivalence to the one
$$
  S\big( U(1) \times U(1) \big)
  \ra
  SU(1+1)
  \ra
  SU(1+1) / S\big(U(1) \times U(1)),
$$ 
not the isometric equivalence.
In fact, a conformal map $h: SU(1+1) / S\big(U(1) \times U(1)) \ra \bbc P^1$ will be constructed such that the identity map on $SU(2)$ is the bundle map covering  it.
The latter bundle will play an important role for the case
$\frakm' = \text{Span}_{\bbr} \{\hat{X},\widehat{iX}\}.$

Of course,
$$
S^3\cong SU(2)=\{A\in\GL(2,\bbc)\ :\  A^* A=I\text{ and } \det(A)=1\}
$$
for $S^3 = \{(z_1, z_2) \big| |z_1|^2 + |z_2|^2 =1\}$
under the map
$$
  (z_1, z_2) 
  \mapsto 
  \left[\begin{array}{cccc}
        \bar{z}_1 & - \bar{z}_2\\
        z_2 & z_1\\
        \end{array}
  \right]
  : S^3 \ra SU(2).
$$

From now on, we  use the convention of
$\frak{gl}(k,\bbc)\subset\frak{gl}(2k,\bbr)$ by
$$
\left[\begin{array}{cccc}
z_{11} & z_{12}\\
z_{21} & z_{22}\\
\end{array}\right]
\lra
\left[\begin{array}{cccc}
x_{11} +i y_{11}  & x_{12} +i y_{12}  \\
x_{21} +i y_{21}  & x_{22} +i y_{22} \\
\end{array}\right]
\lra
\left[\begin{array}{rrrrrrrr}
x_{11} &-y_{11} &x_{12} &-y_{12}\\
y_{11} & x_{11} &y_{12} & x_{12}\\
x_{21} &-y_{21} &x_{22} &-y_{22}\\
y_{21} & x_{21} &y_{22} & x_{22}\\
\end{array}\right],
$$
which is an isometric monomorphism with respect to the metric 
on $GL(k, \bbc)$ and on $GL(2k, \bbr),$ given by
$$
  \langle A,B \rangle 
  = \tfrac{1}{k}\text{Re}\big(\text{Tr}(A^{*}B) \big), 
      \qquad A,B \in \frak{gl}(k,\bbc)
$$
and
$$
  \langle C,D \rangle 
  = \tfrac{1}{2k}\text{Tr}(C^{t}D), \qquad C,D \in \frak{gl}(2k,\bbr),
$$
respectively.

\bigskip
The group $SU(2)$ has the following natural representation into
$\GL(4,\bbr)$:
$$
w=
\left[
\begin{array}{rrrr}
\fm w_1 &w_2 &-w_3 &-w_4\\
-w_2 &w_1 &w_4 &-w_3\\
w_3 &-w_4 &w_1 &-w_2\\
w_4 &w_3 &w_2 &w_1
\end{array}
\right]
$$
with the condition $w_1^2 + w_2^2 + w_3^2 + w_4^2 =1$.
In fact, the map
$$
w_1 + w_2 i + w_3 j + w_4 k
\longmapsto
w
$$
is  an isometric monomorphism  from the unit quaternions into $\gl(4,\bbr)$.
The circle group
$$
S\big( U(1) \times U(1) \big) =
\left\{
\left[
\begin{array}{ll}
e^{-iz}  &0  \\
0& e^{iz}  \\
\end{array}\right]\ :\ 0\leq z\leq 2\pi
\right\}
$$
\noindent
is a subgroup of $SU(2)$, and acts on $SU(2)$ as right translations,
freely with quotient 
$SU(1+1) / S\big(U(1) \times U(1)),$
which is an affine symmetric space and produces a principal circle bundle
$$
  S\big( U(1) \times U(1) \big)
  \ra
  SU(1+1)
  \ra
  G_{1,1}= SU(1+1) / S\big(U(1) \times U(1)).
$$

\bigskip
Let $\wt w$ be the ``$i$-conjugate'' of $w$ (replace $w_2$ by $-w_2$). That is,
$$
\wt w=
\left[
\begin{array}{rrrr}
\fm w_1 &-w_2 &-w_3 &-w_4\\
w_2 &w_1 &w_4 &-w_3\\
w_3 &-w_4 &w_1 &w_2\\
w_4 &w_3 &-w_2 &w_1
\end{array}
\right].
$$
Then,
$$
w \wt w=
{\tiny
\left[
\begin{array}{rrrr}
w_1^2+w_2^2-w_3^2-w_4^2 &0 &-2 (w_1 w_3+w_2 w_4) &2 w_2 w_3-2 w_1 w_4\\
0 &w_1^2+w_2^2-w_3^2-w_4^2 &-2 w_2 w_3+2 w_1 w_4 & -2 (w_1 w_3+w_2 w_4)\\
2 (w_1 w_3+w_2 w_4) &2 w_2 w_3-2 w_1 w_4 &w_1^2+w_2^2-w_3^2-w_4^2 &0\\
-2 w_2 w_3+2 w_1 w_4 &2 (w_1 w_3+w_2 w_4) &0 &w_1^2+w_2^2-w_3^2-w_4^2
\end{array}
\right]
}
$$
and
$$
(w_1^2+w_2^2-w_3^2-w_4^2)^2+(2 w_1 w_3+2 w_2 w_4)^2 +(-2 w_2 w_3+2 w_1
w_4)^2=1.
$$
Clearly, $\bbc P^1$ can be identified with the following
 $$
\bbc P^1 =
\left\{
\left[
\begin{array}{rrrr}
x & 0 &-y &-z  \\
0 & x & z &-y\\
y &-z & x & 0\\
z & y & 0 & x
\end{array}\right]
:\  x^2+y^2+z^2=1
\right\},
$$
which is a subset of $SU(2)$ such that $i$-conjugate on $\bbc P^1$ is the identity map of $\bbc P^1$.
And the map
$$
p: SU(2)\lra \bbc P^1
$$
defined by
$$
p(w)=w \wt w
$$
has the following properties:
\begin{align*}
p(wv)&=w p(v) \wt w\quad\text{for all } w,v\in SU(2)\\
p(wv)&=p(w) \quad\text{if and only if}\quad 
              v\in S\big(U(1) \times U(1)) \cong S^1
\end{align*}
under the convention of 
$S\big(U(1) \times U(1)) \hookrightarrow GL(4,\bbr).$
This shows that the map $p$ is, indeed, the orbit map of the
principal bundle 
$$S^1\lra SU(2) \stackrel{p}\lra \bbc P^1.$$
But we have to be careful that the inclusion map $\bbc P^1 \hookrightarrow SU(2)$ is not a cross-section in this bundle. In fact,
$p(v) = v^2 \in \bbc P^1$ for any $ v \in \bbc P^1.$

\bigskip
Define a map $h: SU(2)/S\big(U(1) \times U(1)\big) \ra \bbc P^1$ by
$$h(v H) = v^2 = p(v) \qquad v \in \bbc P^1,$$
where $H = S\big(U(1) \times U(1)\big).$
Then, the identity map of $SU(2)$ is a trivially isomorphic bundle map which covers the map $h.$
Under the identification 
$
 (x,y,z) =
 \left[
 \begin{array}{rrrr}
 x & 0 &-y &-z  \\
 0 & x & z &-y\\
 y &-z & x & 0\\
 z & y & 0 & x
 \end{array}\right]
 : S^2 \cong \bbc P^1,
$
give the metric $\langle \cdot , \cdot \rangle _{S^2}$ of $S^2$ to $\bbc P^1$ and consider a metric space 
$\big( \bbc P^1, \langle \cdot , \cdot \rangle _{S^2} \big) .$ Will $h$ be an isometry?

\bigskip
The Lie group $SU(2)$ will have a left-invariant Riemannian metric
given by the
following orthonormal basis on the Lie algebra $\mathfrak{su}(2) $
$$
  E_1 =
  \left[
          \begin{array}{cc}
            0 & -1 \\ 1 & 0
          \end{array}
  \right]\ ,
  \quad
  E_2 =
  \left[
          \begin{array}{cc}
            0 & i \\ i & 0
          \end{array}
  \right]\ ,
  \quad
  E_3 =
  \left[
          \begin{array}{cc}
            -i & 0 \\ 0 & i
          \end{array}
  \right]\ ,
$$
which correspond to
\[
e_{1}= \left( \begin{array}{rrrr}
0 & 0 & -1& 0  \\
0 & 0 & 0  & -1\\
1 & 0 & 0 &0\\
0 & 1 & 0 &0
\end{array}\right),\;\;
e_{2}= \left( \begin{array}{rrrr}
0 & 0 & 0& -1  \\
0 & 0 & 1  & 0\\
0 & -1 & 0 &0\\
1 & 0 & 0 &0
\end{array}\right) ,\;\;
e_{3}=
\left( \begin{array}{rrrr}
0 & 1 & 0& 0  \\
-1 & 0 & 0  & 0\\
0 & 0 & 0 &-1\\
0 & 0 & 1 &0\\
\end{array}\right) 
\]
in $\frak{gl}(2k,\bbr),$ respectively.
Notice that 
$[e_1,e_2]=2 e_3$.

\bigskip
In order to understand 
the map $h$ between base spaces and the projection map $p$ better, consider the subset of $SU(2)$:
\begin{align*}
T&=
\left\{
\left[\begin{array}{ll}
\cos x &-(\sin x)e^{-i y}\\
(\sin x)e^{i y} &\cos x\\
\end{array}\right]\ :\ 0\leq x\leq \pi,\ 0\leq y\leq 2\pi
\right\}\\
&=
\left\{
\left[\begin{array}{rrrrrrrr}
\cos x &0          &-(\sin x)(\cos y) &-(\sin x)(\sin y)\\
0          &\cos x & (\sin x)(\sin y) &-(\sin x)(\cos y)\\
 (\sin x)(\cos y) &-(\sin x)(\sin y) &\cos x &0         \\
 (\sin x)(\sin y) & (\sin x)(\cos y) &0          &\cos x\\
\end{array}\right]
\right\} \\
\end{align*}
which is the exponential image of
$$
\frakm=
\left\{\left[\begin{array}{cc}
 0 &  -\bar{\xi}^{t} \\ \xi &0
\end{array} \right]\ :\ \xi  \in  {\bbc} \right\}.
$$
Furthermore, it is exactly same as $\bbc P^1,$ so the map $p$ restricted to $T$ is just the squaring map; that is,
$$
p(w)=w^2,\quad w\in T.
$$

\medskip
To check $h$ is a conformal map:
given 
$$
  w =\big(\cos{x}, (\sin{x})(\cos{y}), (\sin{x})(\sin{y}) \big) \in T = \bbc P^1,$$
\begin{align*}
  \big| D_1(wH) \big| 
  &= \big|(D_1 w)^{\text{h}} \big| \\
  &= \big|
           \big(
                 (\cos{y}) {L_w}_{*} e_1 + (\sin{y}) {L_w}_{*} e_2
           \big)^{\text{h}} 
     \big| \\
  &= 1
\end{align*}
and
\begin{align*}
  &\big| D_2(wH) \big| = \big|(D_2 w)^{\text{h}} \big| \\
  &= \big| 
           \big( -\tfrac{1}{2} (\sin{2x})(\sin{y}) {L_w}_{*} e_1 
                   + \tfrac{1}{2} (\sin{2x})(\cos{y}) {L_w}_{*} e_2
                   - (\sin^2 {x}) {L_w}_{*} e_3
           \big)^{\text{h}} 
     \big| \\
  &= \tfrac{1}{2} \big| \sin{2x} \big|,
\end{align*}
while, under the expression $\langle a,b,c \rangle$ of vectors in $\bbr ^3,$
\begin{align*}
  \big| D_1 \, h(wH) \big| 
  &= \big| D_1 \, w^2 \big| \\
  &= \big|
         \langle 
                 -2\sin{2x}, \, 2 (\cos{2x}) (\cos{y}), \, 2 (\cos{2x}) (\sin{y})
         \rangle 
     \big| \\
  &= 2
\end{align*}
and
\begin{align*}
  \big| D_2 \, h(wH) \big| 
  &= \big| D_2 \, w^2 \big| \\
  &= \big|
         \langle 
                 0, \, - (\sin{2x}) (\sin{y}), \,  (\sin{2x}) (\cos{y})
         \rangle 
     \big| \\
  &= \big| \sin{2x} \big|.
\end{align*}
Thus $h$ is a conformal map.

\bigskip

\begin{theorem}[\cite{Pin}]
\label{area-u2}
Let $S^1\ra SU(2)\ra \Big( \bbc P^1, \langle \cdot , \cdot \rangle _{S^2} \Big)$ be the natural fibration.
Let $\gamma$ be a piecewise smooth, simple closed curve on $\bbc P^1$.
Then the holonomy displacement along $\gamma$ is given by
$$
  V(\gamma)= e^{\tfrac{1}{2} A(\gamma) i}  
           = e^{2 \cdot A( h^{-1} \circ \gamma) \Phi} 
             \in S^1 \cong S\big( U(1) \times U(1) \big)
$$
where $A(\gamma)$ is the area of the region on $\bbc P^1$
enclosed by $\gamma$ and 
$$
 \Phi = 
  \left[
         \begin{array}{cc}
             i &  0 \\ 0 & -i
         \end{array} 
  \right].
$$
\end{theorem}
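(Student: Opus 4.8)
The plan is to compute the holonomy directly from the curvature of the left-invariant connection and then reconcile the two displayed expressions using the conformality of $h$ that the preceding discussion establishes. First I would record the structure: the left-invariant metric on $SU(2)$ fixed by $E_1,E_2,E_3$ with $[E_1,E_2]=2E_3$ is bi-invariant of constant sectional curvature $1$, so $SU(2)$ is the unit round $S^3$; the bundle $S^1\ra SU(2)\ra\bbc P^1$ is, via the identifications already made, the classical Hopf fibration over $S^2$ (a principal $S^1$-bundle over $S^2$ with total space $S^3$ is the Hopf bundle up to orientation), with the circle $S\big(U(1)\times U(1)\big)$ acting by right translation; and its connection --- the orthogonal complement of the fibers in $SU(2)$ --- coincides with the connection of $S\big(U(1)\times U(1)\big)\ra SU(2)\ra SU(2)/S\big(U(1)\times U(1)\big)$, because the fibers of $p$ are exactly the cosets $wH$, so that the horizontal lift of $\gamma$ agrees with the horizontal lift of $h^{-1}\circ\gamma$ for the latter bundle.

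Next I would compute the curvature of this principal connection in left-invariant frames. The vertical line at each point is spanned by the left-invariant field generated by $\Phi$ and the horizontal plane by $e_1,e_2$; hence the connection form $\omega$ satisfies $\Omega=d\omega$ with $\Omega(e_1,e_2)=-\omega([e_1,e_2])=2\Phi$, using $[e_1,e_2]=2e_3$. Since the projection onto $SU(2)/S\big(U(1)\times U(1)\big)$ sends $e_1,e_2$ to an orthonormal frame, $\Omega$ descends there to $2\Phi$ times the area form, and the abelian holonomy identity $V(\gamma)=\exp\!\big(-\!\int_D\Omega\big)$ over a disk $D$ bounded by $h^{-1}\circ\gamma$ gives $V(\gamma)=e^{2\,A(h^{-1}\circ\gamma)\,\Phi}$ once orientations are fixed compatibly; equivalently, this is \cite{Pin} read in the $h$-picture. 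The first equality then follows from the conformality computation preceding the theorem: $h$ scales lengths by the factor $2$, hence areas by $4$, so $A(\gamma)=4\,A(h^{-1}\circ\gamma)$, and under the isomorphism $S^1\cong S\big(U(1)\times U(1)\big)$ carrying $e^{i\theta}$ to $e^{\theta\Phi}=\diag(e^{i\theta},e^{-i\theta})$ one obtains
\[
  V(\gamma)=e^{2\,A(h^{-1}\circ\gamma)\,\Phi}=e^{\tfrac12 A(\gamma)\,\Phi}=e^{\tfrac12 A(\gamma)\,i}.
\]

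The step I expect to be the main obstacle is the bookkeeping of normalizations and signs: confirming that the prescribed left-invariant metric is exactly that of the unit $S^3$ (so that $SU(2)/S\big(U(1)\times U(1)\big)$ with the submersion metric is the round sphere of radius $\tfrac12$, differing from $\big(\bbc P^1,\langle\cdot,\cdot\rangle_{S^2}\big)$ by precisely the conformal factor $2$ already computed), pinning down the constant $2\Phi$ in the curvature and the orientation in the abelian holonomy formula, and choosing the sign in $S^1\cong S\big(U(1)\times U(1)\big)$ consistently with \cite{Pin}. Once these points are settled, the remaining steps are routine.
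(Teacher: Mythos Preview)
Your proposal is correct, but it proceeds along a different line from the paper. The paper gives an explicit coordinate computation: it parametrizes a lift $\widetilde\gamma(t)\in T\subset SU(2)$ of $\gamma$ in the chart $(x,y)\mapsto(\cos 2x,\sin 2x\cos y,\sin 2x\sin y)$, writes a general lift as $\eta(t)=\widetilde\gamma(t)\cdot\omega(t)$ with $\omega(t)=\exp(z(t)e_3)$, and imposes horizontality $\langle\eta^{-1}\eta',e_3\rangle=0$ to obtain the ODE $z'(t)=\sin^2 x(t)\,y'(t)$. It then approximates $\gamma$ by boundaries of coordinate rectangles, integrates $z'$ around each rectangle to get $b(\sin^2(p+a)-\sin^2 p)$, and compares this to the area integral $\int\!\!\int 2\sin 2x\,dx\,dy$ over the same rectangle; the factor $\tfrac12$ drops out of this direct comparison.

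Your route bypasses the coordinates entirely: you read off the curvature $\Omega(e_1,e_2)=-\omega([e_1,e_2])=2\Phi$ from the bracket relation, invoke the abelian holonomy identity $V(\gamma)=\exp\big(\pm\int_D\Omega\big)$, and then use the conformality of $h$ (already established just before the theorem) to pass between the two area normalizations. This is cleaner and makes transparent why the answer is controlled by a single structure constant; it also explains immediately why the formula is insensitive to which of the two disks on $S^2$ one chooses, since $e^{2\pi\Phi}=I$. The paper's computation, by contrast, is fully self-contained and avoids appealing to the general curvature--holonomy relation, at the cost of the rectangle bookkeeping. Your caveat about signs and normalizations is apt: that is indeed the only place real care is needed in your argument, whereas in the paper's approach those choices are absorbed into the explicit parametrization.
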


\begin{proof}
Let $\gamma(t)$ be a closed loop on $\bbc P^1$ with $\gamma(0)=p(I_4)$.
Therefore,
{\tiny
\begin{align*}
\gamma(t)=
\left[\begin{array}{rrrrrrrr}
\cos 2x(t) &0          &-\sin 2x(t)\cos y(t) &-\sin 2x(t)\sin y(t)\\
0          &\cos 2x(t) & \sin 2x(t)\sin y(t) &-\sin 2x(t)\cos y(t)\\
 \sin 2x(t)\cos y(t) &-\sin 2x(t)\sin y(t) &\cos 2x(t) &0         \\
 \sin 2x(t)\sin y(t) & \sin 2x(t)\cos y(t) &0          &\cos 2x(t)\\
\end{array}\right]
\end{align*}}
Let
\begin{align*}
{\tiny
\wt\gamma(t)=
\left[\begin{array}{rrrrrrrr}
\cos x(t) &0          &-\sin x(t)\cos y(t) &-\sin x(t)\sin y(t)\\
0          &\cos x(t) & \sin x(t)\sin y(t) &-\sin x(t)\cos y(t)\\
 \sin x(t)\cos y(t) &-\sin x(t)\sin y(t) &\cos x(t) &0         \\
 \sin x(t)\sin y(t) & \sin x(t)\cos y(t) &0          &\cos x(t)\\
\end{array}\right]
}
\end{align*}
with $0\leq x(t)\leq\pi/2$
so that $p(\wt\gamma(t))=\gamma(t)$ ($\wt\gamma$ is a lift of $\gamma$),
and let
\[
\omega(t)=
\left[
\begin{array}{rrrr}
\cos z(t)  &  -\sin z(t) & 0& 0  \\
\sin z(t)  & \cos z(t)   & 0 & 0\\
 0& 0 & \cos z(t) & \sin z(t) \\
 0& 0 & -\sin z(t) & \cos z(t)  \\
\end{array}\right].
\]
Put
$$
\eta(t)=\wt\gamma(t)\cdot\omega(t).
$$
Then still $p(\eta(t))=\gamma(t)$, and $\eta$ is another lift of
$\gamma$. We wish $\eta$ to be the horizontal lift of $\gamma$.
That is, we want $\eta'(t)$ to be orthogonal to the fiber at $\eta(t)$.

The condition is that
$\angles{\eta'(t),(\ell_{\eta(t)})_{*}(e_{3})}=0$, or equivalently,
$\angles{(\ell_{\eta(t)\inv})_*\eta'(t),e_3}=0$.
That is,
$$
\eta(t)^{-1} \cdot \eta'(t)= \alpha_{1}e_1  + \alpha_{2}e_{2}
$$
for some $\alpha_1,\alpha_2 \in\bbr$.
From this, we get the following equation:
\begin{equation}
\label{shpere-z}
z'(t)=\sin^2 x(t) y'(t).
\end{equation}

Since any piecewise smooth curve can be approximated by a sequence of
piecewise linear curves which are sums of boundaries of rectangular
regions,
it will be enough to
prove the statement for a particular type of curves as follows ~\cite{CL}:
Suppose we are given a rectangular region in the $xy$-plane
\begin{align*}
p \leq x \leq p+a,\qquad
q \leq y \leq q+b.
\end{align*}
Consider the image $R$ of this rectangle in $\bbc P^1$ by the map
$$
(x,y)\mapsto \bfr(x,y)=(\cos 2x, (\sin 2x)(\cos y), (\sin 2x)(\sin y)).
$$

\noindent Then  $||\bfr_x\x\bfr_y||=
2 \sin 2x$, (because $0\leq x\leq \pi/2$). Thus, the area of $R$ is
\begin{align*}
\int_q^{q+b}\int_p^{p+a} 2\sin 2x\ dx dy=2 b(\sin^2(p+a)-\sin^2(p)).
\end{align*}

On the other hand, the change of $z(t)$ along the boundary $\gamma(t)$ of this region
can be calculated using  condition (\ref{shpere-z}). Let  $\gamma(t)$  be represented by
$(p+4at,q)$          for  $t\in [0, \frac{1}{4}]$,
$(p+a,q+b(4t-1)) $  for  $t\in [\frac{1}{4}, \frac{1}{2}]$, $(p+a(3-4t),q+b)$ for $ t\in [\frac{1}{2}, \frac{3}{4}]$,
$(p,q+b(4-4t)) $ for $t\in [\frac{3}{4}, 1]$. Then
$$
z(1)-z(0)
=\int_0^1 z'(t) dt
=b\cdot\sin^2 (p+a)- b\cdot\sin^2 (p).
$$

Thus the total vertical  change of $z$-values, $z(1)-z(0)$,  along the perimeter of this rectangle
is
$$
b\cdot(\sin^2 (p+a)-\sin^2 (p))
$$
which is $\frac{1}{2}$ times  the area. Hence we get the conclusion.
\end{proof}
\bigskip

\section{
             The bundle
             $U(n)\lra U(n+m)/U(m)\lra G_{n,m}$
           }

To deal with the bundle
\[ U(n) \rightarrow U(n+m)/U(m) {\rightarrow}  G_{n,m}\, , \]
we investigate the bundle
\[ U(n) \times U(m) \rightarrow U(n+m) {\rightarrow}  G_{n,m}\, . \]

The Lie algebra of $U(n+m)$ is $\fraku(n+m)$, the skew-Hermitian
matrices, and has the following canonical decomposition:
$$
\frakg=\frakh +\frakm,
$$
where
$$
\frakh=\fraku(n)+\fraku(m)=\left\{
\left[\begin{array}{cc}
A  & 0\\ 0 & B \end{array}\right]\ :\
A\in \fraku(n),\  B\in \fraku(m)\right\}
$$
and
$$
\frakm=
\left\{
  \hat{X} \!:=
  \left[
          \begin{array}{cc}
            0 & - X^* \\ X &0
          \end{array}
  \right]\
  :\ X  \in  M_{m,n}(\bbc)
\right\}.
$$

\vspace{0.3cm}
  Define an Hermitian inner product $h : \bbc ^m \rightarrow \bbc$ by
$$h(v,w) = v^*\, w,$$
where $v$ and  $w$ are regarded as column vectors.

\begin{lemma}
\label{lambda}
If a matrix $X \in M_{m,n}$ satisfies $X^{*}X = \lambda I_n$ for some $\lambda \in \bbc,$ then $\lambda$ will be a nonnegtive real number and $\lambda=0$ only if $X$ is trivial. 
\end{lemma}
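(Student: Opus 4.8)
The plan is to exploit the elementary fact that $X^{*}X$ is a positive semi-definite Hermitian matrix, and to read off all three assertions from that. First I would note that $(X^{*}X)^{*} = X^{*}X$, so $X^{*}X$ is Hermitian and all of its eigenvalues are real; since the hypothesis $X^{*}X = \lambda I_n$ makes $\lambda$ the only eigenvalue (with multiplicity $n$), it follows immediately that $\lambda \in \bbr$. I prefer to phrase this without invoking the spectral theorem as a black box: with any nonzero column vector $v \in \bbc^n$ one has $v^{*}(X^{*}X)v = v^{*}(\lambda I_n)v = \lambda\, v^{*}v$, and the left-hand side equals $(Xv)^{*}(Xv) = h(Xv, Xv) = \|Xv\|^2$, which is a nonnegative real number; since $v^{*}v = \|v\|^2 > 0$, we conclude $\lambda = \|Xv\|^2 / \|v\|^2 \in \bbr$ and $\lambda \geq 0$ in one stroke.

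Thus the two displayed computations $\lambda\,\|v\|^2 = \|Xv\|^2$ for all $v \in \bbc^n$ carry all the content: they give $\lambda \in \bbr$ and $\lambda \geq 0$ as just noted.

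For the last assertion, suppose $\lambda = 0$. Then the same identity gives $\|Xv\|^2 = 0$, hence $Xv = \bfo$, for every $v \in \bbc^n$; taking $v$ to run over the standard basis shows that every column of $X$ vanishes, so $X$ is the zero matrix, i.e. $X$ is trivial. (Conversely, if $X$ is trivial then $X^{*}X = 0 = 0\cdot I_n$, so necessarily $\lambda = 0$.) Honestly, there is no genuine obstacle here: the statement is a standard piece of linear algebra, and the only care needed is to carry out the argument consistently with the paper's conventions — $X^{*}$ the conjugate transpose, vectors regarded as columns, and the Hermitian form $h(v,w) = v^{*}w$ introduced just above — rather than appealing to an off-the-shelf diagonalization result.
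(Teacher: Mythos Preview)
Your proof is correct and follows essentially the same idea as the paper's. The paper's argument is the special case of yours obtained by taking $v$ to be a standard basis vector: it simply observes that the $k$-th diagonal entry of $X^{*}X$ equals $X_k^{*}X_k = h(X_k,X_k) \ge 0$, where $X_k$ is the $k$-th column of $X$, and that this diagonal entry is $\lambda$.
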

\begin{proof}
Given any column vector $v$ of $X$,  $\lambda = v^{*}v = h(v,v) \ge 0$ and the equality holds only if $v=0,$ which shows the claim.
\end{proof}

\bigskip
From the lemma \ref{lambda}, we obtain that
\begin{align*}
  U_{m,n}(\bbc)
  &= 
  \{ 
     X \in M_{m,n}(\bbc) 
     \,\, | \,\, 
     X^{*} X = \lambda I_n \text{ for some } \lambda \in \bbc - \{0\}
  \}
  \\
  &=
  \{ 
     X \in M_{m,n}(\bbc) 
     \,\, | \,\, 
     X^{*} X = \lambda I_n \text{ for some } \lambda > 0
  \}.
\end{align*}

\bigskip
\begin{lemma}
\label{calculation}
   Let
   $$
      X = \Big(a^r_k + i b^r_k \Big) \, ,
      Y = \Big(c^r_k + i d^r_k \Big)
      \in M_{m,n}(\bbc)$$
for  $r=1, \cdots, m$, and  $ k=1, \cdots, n$. Suppose that  for their induced
 $\hat{X}, \, \hat{Y} \in \frakm$,
 $$
    [[\hat{X}, \, \hat{Y}], \hat{X}]  = \hat{Z} \in \frakm
 $$
 for some
  $
   Z  =
   \Big(\alpha ^r_k \Big) \in M_{m,n}(\bbc)
  $ for $r=1, \cdots, m$, and  $k=1, \cdots, n$.
  Then we have
 $$
   \alpha ^r_k =
   \sum^{n}_{j=1} (a^r_j + i b^r_j) \big(\!-2 h(Y_j, X_k) + h (X_j, Y_k)\big)
   + \sum^{n}_{j=1} (c^r_j + i d^r_j) \, h(X_j, X_k),
 $$
where $X_k$ and $Y_k$ are $k$-column vectors of $X$ and $Y$ for $k= 1, \cdots, n.$
\end{lemma}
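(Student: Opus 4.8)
The plan is a direct block-matrix computation. Writing the elements of $\frakm$ and $\frakh$ in their $2\times 2$ block forms, I would first evaluate the inner bracket. From
$$
  \hat X\,\hat Y
  =\left[\begin{array}{cc}0&-X^{*}\\ X&0\end{array}\right]
   \left[\begin{array}{cc}0&-Y^{*}\\ Y&0\end{array}\right]
  =\left[\begin{array}{cc}-X^{*}Y&0\\ 0&-XY^{*}\end{array}\right]
$$
and the analogous expression for $\hat Y\,\hat X$, one gets
$$
  [\hat X,\hat Y]
  =\left[\begin{array}{cc}Y^{*}X-X^{*}Y&0\\ 0&YX^{*}-XY^{*}\end{array}\right],
$$
which indeed lies in $\frakh=\fraku(n)+\fraku(m)$, the two diagonal blocks being skew-Hermitian. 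Abbreviating $P:=Y^{*}X-X^{*}Y$ and $Q:=YX^{*}-XY^{*}$, I would then bracket once more with $\hat X$, again by block multiplication, to obtain
$$
  [[\hat X,\hat Y],\hat X]
  =\left[\begin{array}{cc}0&-(PX^{*}-X^{*}Q)\\ QX-XP&0\end{array}\right].
$$
Matching this against the defining shape of an element $\hat Z\in\frakm$ forces $Z=QX-XP$; as a consistency check one verifies $(QX-XP)^{*}=PX^{*}-X^{*}Q$ using $P^{*}=-P$ and $Q^{*}=-Q$, so the right-hand side is automatically of the form $\hat Z$.

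The rest is bookkeeping. Substituting $P$ and $Q$ and collecting terms,
$$
  Z=(YX^{*}-XY^{*})X-X(Y^{*}X-X^{*}Y)
   =YX^{*}X-2\,XY^{*}X+XX^{*}Y .
$$
For the $k$-th columns $X_k,Y_k$ of $X,Y$ one has $(X^{*}X)_{jk}=X_j^{*}X_k=h(X_j,X_k)$, and likewise $(Y^{*}X)_{jk}=h(Y_j,X_k)$ and $(X^{*}Y)_{jk}=h(X_j,Y_k)$; hence the $(r,k)$ entry of $Z$ is
$$
  \alpha^{r}_{k}
  =\sum_{j=1}^{n}Y_{rj}\,h(X_j,X_k)
   -2\sum_{j=1}^{n}X_{rj}\,h(Y_j,X_k)
   +\sum_{j=1}^{n}X_{rj}\,h(X_j,Y_k).
$$
Grouping the two sums carrying the factor $X_{rj}=a^{r}_{j}+ib^{r}_{j}$ and writing $Y_{rj}=c^{r}_{j}+id^{r}_{j}$ in the remaining sum yields precisely the asserted identity.

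I do not expect a genuine obstacle: the whole argument is a bounded, mechanical calculation with no case analysis. The one delicate point is the adjoint bookkeeping — keeping the order of factors straight through the two nested brackets, checking that the two off-diagonal blocks of $[[\hat X,\hat Y],\hat X]$ are really minus-adjoints of one another (so the output genuinely lies in $\frakm$ and the hypothesis ``$=\hat Z\in\frakm$'' is automatic), and correctly identifying the $(j,k)$ entry of a product $A^{*}B$ with $h$ applied to the $j$-th column of $A$ and the $k$-th column of $B$ rather than to rows. Once those conventions are pinned down the formula drops out.
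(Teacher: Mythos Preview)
Your proof is correct and follows essentially the same route as the paper: a direct block-matrix expansion of the double bracket, after which reading off the lower-left block gives $Z=YX^{*}X-2XY^{*}X+XX^{*}Y$ and the entrywise formula follows. The paper merely records the expansion $[[\hat X,\hat Y],\hat X]=\hat X(2\hat Y\hat X-\hat X\hat Y)-\hat Y\hat X\hat X$ and leaves the remaining bookkeeping to the reader, whereas you have carried it out in full.
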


\begin{proof}
It is easily obtained from
$$
  [[\hat{X},\hat{Y}], \hat{X}] =
  \hat{X}(2 \hat{Y}\hat{X}-\hat{X}\hat{Y})
   - \hat{Y}\hat{X}\hat{X}.$$
\end{proof}

  Recall the following proposition, which gives the clue for the holonomy displacement in the principal $U(n)$ bundles over Grassmaniann manifolds 
$U(n) \ra U(n+m)/U(m) \stackrel{\pi} \ra G_{n.m}$.
\begin{proposition} \cite{KN} \label{affine}
 Let $(G,H,\sigma)$ be a symmetric space and
 $\mathfrak{g} = \mathfrak{h} + \mathfrak{m}$
 the canonical decomposition. Then there is a natural one-to-one correspondence between the set of linear subspaces $\mathfrak{m}'$ of $\mathfrak{m}$ such that
$[[\mathfrak{m}', \mathfrak{m}'], \mathfrak{m}'] \subset \mathfrak{m}'$
and the set of complete totally geodesic submanifolds $M'$ through the origin $0$ of the affine symmetric space $M=G/H,$ the correspondence being given by 
$\mathfrak{m}' = T_0 (M').$
\end{proposition}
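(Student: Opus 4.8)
The plan is to pass through the Lie-algebraic description of the canonical connection on $M=G/H$ and reduce the geometric notion ``totally geodesic'' to the algebraic Lie-triple condition by way of the curvature tensor. The ingredients I would invoke are the symmetric-space bracket relations
$$
[\mathfrak{h},\mathfrak{h}]\subset\mathfrak{h},\qquad
[\mathfrak{h},\mathfrak{m}]\subset\mathfrak{m},\qquad
[\mathfrak{m},\mathfrak{m}]\subset\mathfrak{h},
$$
the fact that every geodesic of $M$ issuing from the origin $o=eH$ has the form $t\mapsto \exp(tX)\cdot o$ for a unique $X\in\mathfrak{m}\cong T_o M$, and the curvature identity $R(X,Y)Z=-[[X,Y],Z]$ at $o$ for $X,Y,Z\in\mathfrak{m}$. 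With this last identity, the hypothesis $[[\mathfrak{m}',\mathfrak{m}'],\mathfrak{m}']\subset\mathfrak{m}'$ is literally the statement that $\mathfrak{m}'$ is curvature-invariant, $R(\mathfrak{m}',\mathfrak{m}')\mathfrak{m}'\subset\mathfrak{m}'$, which is the bridge to E.\ Cartan's criterion for totally geodesic submanifolds.

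For the forward direction, given $\mathfrak{m}'$ with $[[\mathfrak{m}',\mathfrak{m}'],\mathfrak{m}']\subset\mathfrak{m}'$, I would set $\mathfrak{h}':=[\mathfrak{m}',\mathfrak{m}']$ and $\mathfrak{g}':=\mathfrak{h}'+\mathfrak{m}'$ and first verify that $\mathfrak{g}'$ is a $\sigma$-invariant subalgebra of $\mathfrak{g}$. Three inclusions are needed: $[\mathfrak{h}',\mathfrak{m}']\subset\mathfrak{m}'$ is exactly the hypothesis; $[\mathfrak{m}',\mathfrak{m}']=\mathfrak{h}'$ holds by definition; and $[\mathfrak{h}',\mathfrak{h}']\subset\mathfrak{h}'$ follows from the Jacobi identity together with the first two. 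Since $\mathfrak{h}'\subset\mathfrak{h}$ and $\mathfrak{m}'\subset\mathfrak{m}$ lie in the $\pm 1$ eigenspaces of $d\sigma$, the subalgebra $\mathfrak{g}'$ is automatically $\sigma$-invariant, so $(\mathfrak{g}',\mathfrak{h}',\sigma|_{\mathfrak{g}'})$ is again a symmetric Lie algebra. Taking $G'$ to be the connected subgroup of $G$ with Lie algebra $\mathfrak{g}'$ and $M':=G'\cdot o$, the orbit is an (immersed) symmetric space with $T_o M'=\mathfrak{m}'$ (the projection $\mathfrak{g}\ra\mathfrak{m}$ carries $\mathfrak{g}'$ onto $\mathfrak{m}'$); it is complete because symmetric spaces are geodesically complete, and it is totally geodesic because its geodesics through $o$ are precisely $\exp(tX)\cdot o$ with $X\in\mathfrak{m}'$ — which are also geodesics of $M$ — and homogeneity under $G'$ propagates this to every geodesic of $M'$.

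For the reverse direction, given a complete totally geodesic $M'$ through $o$, I would put $\mathfrak{m}':=T_o M'$. Vanishing of the second fundamental form makes the Gauss equation read $R^{M'}=R^{M}|_{M'}$, so the ambient curvature preserves $T_o M'$, i.e.\ $R(\mathfrak{m}',\mathfrak{m}')\mathfrak{m}'\subset\mathfrak{m}'$; the curvature identity above then converts this back to $[[\mathfrak{m}',\mathfrak{m}'],\mathfrak{m}']\subset\mathfrak{m}'$. Finally, the two assignments are mutually inverse: a complete totally geodesic submanifold through $o$ is the union of the $M$-geodesics issuing from $o$ in the directions of $T_o M'$, hence is determined by $T_o M'$ alone, giving injectivity; and the construction of the previous paragraph produces such a submanifold for each Lie triple system, giving surjectivity.

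The step I expect to be most delicate is the global bookkeeping in the reverse direction rather than any single computation. Identifying the immersed orbit $G'\cdot o$ with the unique complete totally geodesic submanifold of prescribed tangent space requires care about the gap between local agreement, where $\exp_o$ is a diffeomorphism onto a normal neighborhood, and the global, possibly only immersed, object; one must argue that completeness forces the locally defined totally geodesic piece to extend to all of $\exp_o(\mathfrak{m}')$ and that this coincides with the group orbit. By contrast, the algebraic verifications — the three bracket inclusions and the translation through $R(X,Y)Z=-[[X,Y],Z]$ — are routine once the symmetric-space relations are in hand.
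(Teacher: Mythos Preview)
The paper does not supply its own proof of this proposition; it is quoted verbatim from Kobayashi--Nomizu \cite{KN} and used as a black box. Your argument is the standard one found there (and in Helgason): build $\mathfrak{g}'=[\mathfrak{m}',\mathfrak{m}']+\mathfrak{m}'$, check it is a $\sigma$-stable subalgebra via Jacobi, take the orbit $G'\cdot o$, and for the converse read off the Lie triple condition from curvature invariance via $R(X,Y)Z=-[[X,Y],Z]$. This is correct and matches the cited reference, so there is nothing further to compare.
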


Note that $\mathfrak{m}'$ in the Proposition \ref{affine} will make a bunch of complete totally geodesic submanifolds, each of which is obtained from another one by a translation, in the affine symmetric space $G/H.$

\bigskip
The role of $U_{m,n}(C)$ in this paper will be seen from now on.

\bigskip

\begin{theorem} \label{easy}
Given $X \in U_{m,n}(\bbc)$ and
the natural fibration
$
 U(n) \times U(m) \ra U(n+m) \ra G_{n,m}(\bbc),
$
assume a 2-dimensional subspace $\frakm' = \text{Span}_{\bbr} \{ \hat{X}, \hat{Y}\} $ of
$\frakm\subset {\mathfrak u}(n+m)$
satisfies
\begin{align}
   X^* \, X = \lambda I_n, \quad  X^* \, Y = \mu I_n, \qquad 
    \mu \in \bbc \label{STAR}
\end{align}
for  $ Y \in M_{m,n}(\bbc).$
Then $\frakm'$ gives rise to a complete totally geodesic  surface $S$ in 
$G_{n,m}(\bbc)$ 
if and only if 
$
 \text{Im}\,{\mu} = 0
$  
and $Y \in U_{m,n}(\bbc)$ 
\end{theorem}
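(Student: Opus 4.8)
The plan is to apply Proposition \ref{affine}: since $\frakm'=\mathrm{Span}_\bbr\{\hat X,\hat Y\}$ is $2$-dimensional, it gives rise to a complete totally geodesic surface $S$ through the origin of $G_{n,m}$ if and only if $[[\frakm',\frakm'],\frakm']\subset\frakm'$. Because $\frakm'$ is spanned by $\hat X$ and $\hat Y$, the bracket condition is equivalent to the two inclusions $[[\hat X,\hat Y],\hat X]\in\frakm'$ and $[[\hat X,\hat Y],\hat Y]\in\frakm'$ (the mixed triple brackets reduce to these by antisymmetry and bilinearity). So the whole theorem comes down to translating these two membership conditions into linear-algebraic relations on $X$ and $Y$ via Lemma \ref{calculation}.

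First I would handle $[[\hat X,\hat Y],\hat X]=\hat Z$. By Lemma \ref{calculation} and the hypothesis \eqref{STAR}, $h(X_j,X_k)=\lambda\,\delta_{jk}$ and $h(X_j,Y_k)=\overline{h(Y_k,X_j)}=\overline{\mu}\,\delta_{jk}$, while $h(Y_j,X_k)=\mu\,\delta_{jk}$. Substituting into the formula for $\alpha^r_k$ collapses the two sums to
$$
\alpha^r_k=(a^r_k+ib^r_k)\bigl(-2\mu+\overline{\mu}\bigr)+\lambda\,(c^r_k+id^r_k),
$$
i.e. $Z=(-2\mu+\overline\mu)X+\lambda Y$. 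For $\hat Z\in\frakm'=\mathrm{Span}_\bbr\{\hat X,\hat Y\}$ we need $Z$ to be a real-linear combination of $X$ and $Y$; since $\lambda>0$ is real (Lemma \ref{lambda}) the $\lambda Y$ term is already fine, so the requirement is that $(-2\mu+\overline\mu)$ be real. Writing $\mu=\mathrm{Re}\,\mu+i\,\mathrm{Im}\,\mu$ gives $-2\mu+\overline\mu=-\mathrm{Re}\,\mu-3i\,\mathrm{Im}\,\mu$, whose imaginary part is $-3\,\mathrm{Im}\,\mu$; hence this bracket lies in $\frakm'$ precisely when $\mathrm{Im}\,\mu=0$. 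Next, by symmetry (swapping the roles of $X$ and $Y$ in Lemma \ref{calculation}, which is legitimate since the lemma's hypothesis is just that a triple bracket lies in $\frakm$ — and once $\mathrm{Im}\,\mu=0$ one checks it does), $[[\hat Y,\hat X],\hat Y]=\hat W$ with $W$ a combination of $X$ and $Y$ whose coefficients involve $h(X_j,X_k)$, $h(X_j,Y_k)$, $h(Y_j,X_k)$ and $h(Y_j,Y_k)$; the first three are scalar multiples of the identity by \eqref{STAR}, so $\hat W\in\frakm'$ forces $Y^*Y$ itself to be a scalar matrix, i.e. $Y\in U_{m,n}(\bbc)$. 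This establishes the "only if" direction, and running the computation backwards — with $\mathrm{Im}\,\mu=0$ and $Y^*Y=\nu I_n$ — shows both triple brackets land in $\frakm'$, giving the "if" direction; completeness and the dimension count ($\frakm'$ is $2$-dimensional and $\hat X\neq 0$, $\hat Y\notin\bbr\hat X$ generically, so $S$ is a genuine surface) come for free from Proposition \ref{affine}.

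The main obstacle I anticipate is the bookkeeping in the second bracket $[[\hat Y,\hat X],\hat Y]$: unlike the first one, its expansion via Lemma \ref{calculation} produces a term proportional to $h(Y_j,Y_k)$ with no a priori control, and I must verify carefully that (a) this is exactly the obstruction that forces $Y^*Y$ to be scalar, and (b) once $Y\in U_{m,n}(\bbc)$ and $\mathrm{Im}\,\mu=0$ hold, \emph{every} remaining coefficient is real, so that no further hidden condition (e.g. a relation between $\lambda$, the scalar for $Y^*Y$, and $\mu$) is secretly required. A subtlety to watch is the degenerate case $\hat Y\in\bbr\hat X$, which should be excluded by the standing assumption that $\frakm'$ is genuinely $2$-dimensional; and I should double-check the edge case where $\hat X,\hat Y$ might be linearly dependent over $\bbr$ after the substitution, which cannot happen here since that would make $S$ a geodesic rather than a surface.
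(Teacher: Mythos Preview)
Your approach is essentially the paper's: reduce via Proposition \ref{affine} to the two membership conditions $[[\hat X,\hat Y],\hat X]\in\frakm'$ and $[[\hat Y,\hat X],\hat Y]\in\frakm'$, use Lemma \ref{calculation} on the first to force $\text{Im}\,\mu=0$, and use the second to force $Y^*Y$ scalar (the paper carries out this last step by observing that once $\text{Im}\,\mu=0$ the bracket $[\hat X,\hat Y]$ is block-diagonal with lower block $M=-XY^*+YX^*$, so $-MY=XY^*Y-\mu Y$, and then left-multiplying the relation $XY^*Y=aX+bY$ by $X^*$ --- exactly the obstruction you anticipated). One bookkeeping slip: since $h(X_j,Y_k)$ is the $(j,k)$-entry of $X^*Y=\mu I_n$, you actually have $h(X_j,Y_k)=\mu\,\delta_{jk}$ and $h(Y_j,X_k)=\bar\mu\,\delta_{jk}$ (the reverse of what you wrote), so the coefficient of $X$ in $Z$ is $-2\bar\mu+\mu=-\text{Re}\,\mu+3i\,\text{Im}\,\mu$ rather than $-\text{Re}\,\mu-3i\,\text{Im}\,\mu$; this does not affect the conclusion.
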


\begin{proof}
 To begin with, note that $\lambda >0.$
 Assume that  $\frakm'$ gives rise to a complete totally geodesic  surface $S$  in $G_{n,m}(\bbc)$.
By a translation, without loss of generality, we can assume that $S$ passes through the origin of the affine symmetric space 
$G_{n,m}(\bbc) = U(n+m)/\left(U(n) \times U(m)\right)$

 To show $\text{Im}\mu = 0$ by contradiction, suppose that  
$\text{Im}\mu \not= 0$. Let $e_k \in \bbc^m, \: k=1, \cdots, m,$ be an elementary vector which has all components 0 except for the $k$-component with 1. Then
  $$h(X_k, Y_j) = h(X e_k, Y e_j) = e_k ^* (X^* Y) e_j ,$$
  so the condition (\ref{STAR}) is equivalent to
  $$
     h(X_k, Y_k) = \mu, \quad
     h(X_k, X_k) = \lambda, \quad
     h(X_k, X_j) =0 , \quad
     h(X_k, Y_j) =0
  $$
  \noindent  
  for $k \not= j $ in $\{1, \cdots ,n \}$.  From $ h(X_k, Y_k) = \mu $,
 we obtain
  $$
    -2h(Y_k, X_k) + h(X_k, Y_k) =
    - \text{Re} \mu + 3i \text{Im}  \mu.
  $$
  \noindent
  Thus Lemma ~\ref{calculation}, Proposition \ref{affine} and the hypothesis of totally geodesic say that \\
  \begin{align*}
    a \hat{X} + b \hat{Y}
    =
    [[\hat{X}, \hat{Y}], \hat{X}]
    &= (-\text{Re}\mu + 3i \text{Im}\mu) \hat{X} + \lambda \hat{Y}
    \\
    &=  3 \text{Im}\mu (i\hat{X})
          + (-\text{Re}\mu  \hat{X} + \lambda \hat{Y}) .
  \end{align*}
for some $a, b \in \bbr$.
  Since $\text{Im}\mu \not= 0$, $i \hat{X}$ will lie in $\text{Span}_{\bbr} \{\hat{X}, \hat{Y} \} = \frakm' \subset \mathfrak{u}(n+m)$, and then
  $$
    -i\hat{X} = - (i \hat{X}) = (i\hat{X})^{*} = -i \hat{X}^{*} =  i \hat{X},
  $$
  \noindent
  which implies $\hat{X}=O_{n+m}$, a contradiction. 
 
  From $\text{Im}\mu = 0$,
  $$
    -X^* Y +  Y^* X = - X^* Y + (X^* Y)^*
    =  - 2 i \text{Im}\mu \, I_n = O_n,
  $$ 
  \noindent 
  so
  $$
  [\hat{X}, \hat{Y}] =
  \left[
          \begin{array}{cc}
            O_n & 0 \\ 0 & - X Y^* + Y X^*
          \end{array}
  \right]\
  \in \mathfrak{u} (m) \subset \mathfrak{u} (n+m).
  $$
  Let $M = - X Y^* + Y X^*$. Then
  $$
  [\hat{X}, \hat{Y}] =
  \left[
          \begin{array}{cc}
            O_n & 0 \\ 0 & M
          \end{array}
  \right]\
  $$
  and
  $ [[\hat{Y}, \hat{X}], \hat{Y}] = - \widehat{MY} \in \frakm'$
  from the hypothesis of the condition of totally geodesic and from
  Proposition \ref{affine}.
  Note that
  $$
    -MY =  XY^*Y - Y X^*Y = XY^*Y - Y \mu I_n
           = XY^*Y- (\text{Re}u) Y.
  $$
  Thus $XY^*Y = aX + bY$  for some $a, b \in \bbr $. Then
  $
    \lambda Y^*Y = X^*(XY^*Y)
    = X^*( aX + bY) = (a \lambda + b \text{Re}\mu) I_n
  $
  and so
  $$
    Y^*Y= \tfrac{ a \lambda + b \text{Re}\mu}{\lambda}I_n, \quad 
              \tfrac{ a \lambda + b \text{Re}\mu}{\lambda}  \in \bbr.
  $$
 Since $\mathfrak{m}'= \text{Span}_{\bbr} \{ \hat{X}, \hat{Y}\}$ is 2-dimensional, $Y$ is not a zero matrix and so from Lemma \ref{lambda}, 
 $Y \in U_{m,n}(\bbc)$.

Conversely, assume the necessary part holds and let $Y^{*}Y = \eta I_n$, where $\eta > 0$.
Then, the condition $\text{Im}\mu = 0$ says that 
  $$
  [\hat{X}, \hat{Y}] =
  \left[
          \begin{array}{cc}
            O_n & 0 \\ 0 & M
          \end{array}
  \right]\ , \,\,
  [[\hat{X}, \hat{Y}], \hat{X}] = \widehat{MX} \,\,
  \text{ and } \,\,
  [[\hat{Y}, \hat{X}], \hat{Y}] = -\widehat{MY},
  $$  
  where $M = - X Y^* + Y X^*$. It suffices to show that $[[\hat{X}, \hat{Y}], \hat{X}] \in \frakm'$  and
   $[[\hat{Y}, \hat{X}], \hat{Y}] \in \frakm'$.
  Since
  $$
    MX = - XY^*X + Y X^*X = - X \bar{\mu} I_n + Y \lambda I_n
          = - \text{Re}\mu X + \lambda Y,
  $$ \noindent  we get  $[[\hat{X}, \hat{Y}], \hat{X}] \in \frakm'$.
  We also get $[[\hat{Y}, \hat{X}], \hat{Y}] \in \frakm'$ since
  $$
    -MY =  XY^*Y - Y X^*Y
           = X \eta I_n - Y \mu I_n
           = \eta X - \text{Re}\mu Y.
  $$
  Hence we get the conclusion.
\end{proof}

\begin{corollary}
Given $X,Y \in U_{m,n}(\bbc)$ and given the natural fibration
$
 U(n) \times U(m) \ra U(n+m) \ra G_{n,m}(\bbc),
$
assume $\frakm' = \text{Span}_{\bbr} \{ \hat{X}, \hat{Y}\} $ produce a 2-dimensional subspace of $\frakm\subset {\mathfrak u}(n+m).$ 
If $X^* \, Y = \mu I_n \text{ for some } \mu \in \bbr,$
then 
$\frakm'$ will give rise to a complete totally geodesic  surface $S$ in 
$G_{n,m}(\bbc)$ 
\end{corollary}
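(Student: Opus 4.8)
The plan is to read this off directly from Theorem~\ref{easy}: the Corollary is nothing but the ``if'' (sufficiency) direction of that theorem, repackaged under hypotheses that are a priori stronger. First I would line up the hypotheses. Since $X \in U_{m,n}(\bbc)$, Lemma~\ref{lambda} gives $X^{*}X = \lambda I_n$ with $\lambda > 0$; we are handed $X^{*}Y = \mu I_n$ with $\mu \in \bbr$, so in particular $\mu \in \bbc$ with $\text{Im}\,\mu = 0$; and $\frakm' = \text{Span}_{\bbr}\{\hat X,\hat Y\}$ is assumed $2$-dimensional. Thus condition~(\ref{STAR}) holds, and moreover the two extra conditions appearing on the right side of the equivalence in Theorem~\ref{easy}, namely $\text{Im}\,\mu = 0$ and $Y \in U_{m,n}(\bbc)$, are both satisfied: the first because $\mu$ is real, the second by hypothesis on $Y$.

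Then I would simply invoke Theorem~\ref{easy}: under these conditions $\frakm'$ gives rise to a complete totally geodesic surface $S$ in $G_{n,m}(\bbc)$, which is exactly the assertion. For completeness I would recall in one sentence the mechanism driving the sufficiency direction: writing $Y^{*}Y = \eta I_n$ with $\eta > 0$, the reality of $\mu$ forces $[\hat X,\hat Y] = \diag(O_n, M)$ with $M = -XY^{*} + YX^{*}$, and then $MX = -\text{Re}\,\mu\, X + \lambda Y$ and $-MY = \eta X - \text{Re}\,\mu\, Y$, so that $[[\hat X,\hat Y],\hat X] = \widehat{MX}$ and $[[\hat Y,\hat X],\hat Y] = -\widehat{MY}$ both lie in $\frakm'$; hence $[[\frakm',\frakm'],\frakm'] \subset \frakm'$ and Proposition~\ref{affine} produces the surface $S$.

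I do not expect any real obstacle here, since all the work is already contained in Theorem~\ref{easy}; the only point worth emphasizing is that the Corollary is strictly a specialization --- it postulates $Y \in U_{m,n}(\bbc)$ and $\mu$ real outright, whereas the theorem starts from an arbitrary $Y \in M_{m,n}(\bbc)$ and $\mu \in \bbc$ and \emph{deduces} these facts. Consequently the proof amounts to observing that ``$X^{*}Y = \mu I_n$ with $\mu \in \bbr$'' is precisely ``condition~(\ref{STAR}) together with $\text{Im}\,\mu = 0$,'' and then citing the sufficiency half of Theorem~\ref{easy}.
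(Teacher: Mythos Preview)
Your proposal is correct and matches the paper's approach: the paper states this as an immediate corollary of Theorem~\ref{easy} with no separate proof, and you have correctly identified that the hypotheses here are exactly condition~(\ref{STAR}) together with $\text{Im}\,\mu = 0$ and $Y \in U_{m,n}(\bbc)$, so the sufficiency direction of Theorem~\ref{easy} applies directly. Your additional sentence unpacking the mechanism (the computation of $MX$ and $-MY$) is more than the paper provides, but is accurate and harmless.
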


\begin{remark}
Given $X \in U_{m,n}(\bbc), $ if $n \le m, $ then 
$X: \bbc^n \ra \bbc^m$ is a conformal one-one linear map. In view of
$\hat{X} \in {\mathfrak u}(n+m) \subset \text{End}(\bbc^{n+m})$,
$\hat{X}$ sends the subspace $\bbc^n$ to its orthogonal subspace $\bbc^m$ conformally. And the condition of the relation between $X$ and $Y$ in Theorem ~\ref{easy} says that
$$
   h_{\bbc^m}(Xv, Yw) = \mu \: h_{\bbc^n}(v,w) \qquad
   \text{ for } v, w \in \bbc^n,
$$
where $h_{\bbc^k}$ is an Hermitian on
$\bbc^k, \: k= 1,2, \cdots$, given by
$$
   h_{\bbc^k}(u_1,u_2) = u_1^* u_2 \quad \text{ for } u_1, u_2 \in \bbc^k.$$
\end{remark}
\bigskip

 When $n=1$, the condition (\ref{STAR}) is satisfied automatically for any two vectors in $\bbc^m$ by identifying
$M_{m,1}(\bbc)$ with $\bbc^m$.
So we get

\begin{corollary}
\label{geod-cond-cpn}
A 2-dimensional subspace $\frakm'$ of
$\frakm\subset {\mathfrak u}(m+1)$
gives rise to a complete totally geodesic submanifold in the affine symmetric space
${\bbc}P^m = U(1+m)/ \left(U(1) \times U(m) \right)$
if  
$\frakm'$ has two linearly independent tangent vectors $\hat{v}$ and $\hat{w}$ such that
$\text{Im}h_{\bbc^m}(v,w) =0$.
\end{corollary}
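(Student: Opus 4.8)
The plan is to recognize the statement as the $n=1$ specialization of Theorem~\ref{easy}, in which every hypothesis of that theorem becomes automatic except for the reality of the relevant Hermitian pairing, which is precisely the condition imposed here. First I would fix the identification $M_{m,1}(\bbc)\cong\bbc^m$ and record two elementary facts valid for $n=1$: for any nonzero column vector $v\in\bbc^m$ one has $v^{*}v=h_{\bbc^m}(v,v)=|v|^2>0$, so that $v^{*}v=\lambda I_1$ with $\lambda>0$ and hence $v\in U_{m,1}(\bbc)$ (Lemma~\ref{lambda}); and for any $v,w\in\bbc^m$ the product $v^{*}w=h_{\bbc^m}(v,w)$ is a single complex number, i.e.\ automatically of the form $\mu I_1$. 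Thus condition~(\ref{STAR}) imposes nothing on a pair of nonzero vectors of $\bbc^m$ beyond what is automatic.

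Next I would observe that, since $\frakm'$ is $2$-dimensional over $\bbr$ and $\hat v,\hat w\in\frakm'$ are linearly independent, they form a basis, so $\frakm'=\text{Span}_{\bbr}\{\hat v,\hat w\}$; moreover their independence forces $v$ and $w$ to be nonzero, whence $v,w\in U_{m,1}(\bbc)$. Setting $X=v$ and $Y=w$ and taking $n=1$ (so that $G_{1,m}(\bbc)=\bbc P^m$), the hypotheses $X\in U_{m,1}(\bbc)$, $X^{*}X=\lambda I_1$ and $X^{*}Y=\mu I_1$ of Theorem~\ref{easy} hold for free, while $Y\in U_{m,1}(\bbc)$ and $\mu=h_{\bbc^m}(v,w)\in\bbr$ hold by the above together with the hypothesis $\text{Im}\,h_{\bbc^m}(v,w)=0$. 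The sufficiency direction of Theorem~\ref{easy} then yields that $\frakm'$ gives rise to a complete totally geodesic surface in $\bbc P^m$; by Proposition~\ref{affine} this amounts to $[[\frakm',\frakm'],\frakm']\subset\frakm'$, equivalently $\frakm'=T_0(M')$ for a complete totally geodesic submanifold $M'$ through the origin (a surface, being $2$-dimensional) together with the family of its translates. This is the assertion.

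There is no genuinely hard step: all of the substance is already contained in Theorem~\ref{easy}, and the role of $n=1$ is merely to render its remaining hypotheses vacuous. The one point worth a word of care is that $Y=w$ automatically lies in $U_{m,1}(\bbc)$ --- the condition singled out as nontrivial in the general statement of Theorem~\ref{easy} --- which here amounts only to the requirement $w\neq0$, forced by the linear independence of $\hat v$ and $\hat w$. One may also note that the converse holds by the same token: the argument proving Theorem~\ref{easy} shows that a totally geodesic $\frakm'\subset\frakm$ must satisfy $\text{Im}\,h_{\bbc^m}(v,w)=0$ for every basis $\hat v,\hat w$ of it, so the implication above is in fact an equivalence; but only the stated direction is needed here.
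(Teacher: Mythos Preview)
Your proposal is correct and follows the same approach as the paper: the paper's entire justification is the sentence preceding the corollary, namely that when $n=1$ the condition~(\ref{STAR}) holds automatically for any two vectors in $\bbc^m$, so the result is an immediate specialization of Theorem~\ref{easy}. You have simply spelled out in more detail why each hypothesis of Theorem~\ref{easy} becomes vacuous or follows from $\text{Im}\,h_{\bbc^m}(v,w)=0$ in the $n=1$ case.
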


\bigskip
We return to the bundle
$U(n) \ra U(n+m)/U(m) \stackrel{\pi}\lra G_{n,m}$.
Any submanifold $A \subset G_{n,m}$ induces a bundle
$U(n) \ra \pi^{-1}(A) \ra A$,
which is immersed in the original bundle and diffeomorphic to the pullback bundle with respect the inclusion of $A$ into $G_{n,m}$. In fact, in the bundle
$U(n)\times U(m) \ra U(n+m) \stackrel{\tilde{\pi}}\lra G_{n,m}$, the induced distribution in $\tilde{\pi}^{-1}(A)$
from ${\mathfrak u}$(m) in $U(n+m)$ is integrable and preserved by the right multiplication of $U(n),$ so this induces the bundle $U(n) \ra \pi^{-1}(A) \ra A$.

\begin{theorem}
\label{geod-cond-sphere1}
Given a complete totally geodesic  surface $S$ in $G_{m,n}$ which is induced by a 2-dimensional subspace $\mathfrak{m}' \subset \mathfrak{m}$ with the necessary condition in Theorem ~\ref{easy} satisfied, 
the bundle
$U(n) \ra \pi^{-1}(S) \ra S$,
which is immersed in the original bundle
$U(n) \ra U(n+m)/U(m) \stackrel{\pi}\lra G_{n,m}$,
is flat.
\end{theorem}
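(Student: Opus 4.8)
The plan is to compute the curvature of the connection of the immersed bundle $U(n)\to\pi^{-1}(S)\to S$ and show that it vanishes identically. Recall that this immersed bundle is the pullback, under the inclusion $S\hookrightarrow G_{n,m}$, of the bundle $U(n)\to U(n+m)/U(m)\to G_{n,m}$, equipped with the pullback of the connection whose horizontal distribution is the orthogonal complement of the fibres; at the base point $o$ this horizontal space is $\frakm\subset\fraku(n+m)$ and the vertical space is $\fraku(n)\subset\frakh$. Thus ``flat'' means precisely that the $\fraku(n)$-valued curvature form of this connection restricts to $0$ on $TS\times TS$. (We argue with the abstract pullback bundle over $S$ and the pullback connection, so the fact that $\pi^{-1}(S)$ is only immersed, not embedded, in $U(n+m)/U(m)$ is immaterial.)

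The first step is purely algebraic. By hypothesis $\frakm'=\mathrm{Span}_{\bbr}\{\hat X,\hat Y\}$ satisfies the necessary condition of Theorem~\ref{easy}, in particular $X^*Y=\mu I_n$ with $\mu\in\bbr$, so $X^*Y=(X^*Y)^*=Y^*X$. As computed in the proof of Theorem~\ref{easy},
$$
  [\hat X,\hat Y]=\begin{bmatrix} -X^*Y+Y^*X & 0\\ 0 & -XY^*+YX^*\end{bmatrix}=\begin{bmatrix}O_n & 0\\ 0 & M\end{bmatrix},\qquad M=-XY^*+YX^*,
$$
so $[\frakm',\frakm']=\bbr\,[\hat X,\hat Y]$ lies entirely in the second summand $\fraku(m)$ of $\frakh=\fraku(n)+\fraku(m)$; in particular its projection onto $\fraku(n)$ is zero.

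The second step identifies the curvature and propagates the vanishing. Since $G_{n,m}=U(n+m)/\bigl(U(n)\times U(m)\bigr)$ is a symmetric space and the bundle $U(n)\to U(n+m)/U(m)\to G_{n,m}$ is the one associated to $U(n+m)\to G_{n,m}$ by the $U(n)\times U(m)$-action on $U(n)=\bigl(U(n)\times U(m)\bigr)/U(m)$ — an action factoring through the projection $U(n)\times U(m)\to U(n)$ — the structure equation for the canonical connection gives, at $o$,
$$
  \Omega_o(U,V)=-\mathrm{pr}_{\fraku(n)}[U,V]\qquad\text{for }U,V\in\frakm=T_oG_{n,m}
$$
(the overall normalization is irrelevant). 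Taking $U,V\in\frakm'=T_oS$ and invoking the first step, $\Omega_o$ vanishes on $T_oS\times T_oS$. Now let $G'\subset U(n+m)$ be the connected subgroup with Lie algebra $\frakm'+[\frakm',\frakm']$; by Proposition~\ref{affine}, $S=G'\cdot o$, and left translations by $G'$ are isometries of $G_{n,m}$ preserving the bundle and its connection, hence preserving $\Omega$. Since every point of $S$ and every tangent plane $T_pS$ is a $G'$-translate of $o$ and $\frakm'$, the vanishing of $\Omega$ at $o$ forces $\Omega|_{TS\times TS}\equiv 0$, so the pullback connection on $U(n)\to\pi^{-1}(S)\to S$ is flat.

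The main obstacle I expect is the clean identification of the induced connection on $\pi^{-1}(S)\to S$ with the canonical connection and the derivation of the curvature formula $\Omega_o=-\mathrm{pr}_{\fraku(n)}[\cdot,\cdot]$ on $\frakm$; once this is pinned down the conclusion is immediate from Theorem~\ref{easy}. A fully elementary alternative, in the spirit of the proof of Theorem~\ref{area-u2}, would be to parametrize $S$ and the fibre, write an explicit horizontal lift of the boundary of a coordinate rectangle on $S$, impose the horizontality condition, and verify that the vertical ($\fraku(n)$-)displacement is identically zero; this bypasses the homogeneous-bundle machinery at the cost of a longer matrix computation.
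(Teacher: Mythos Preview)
Your proof is correct and rests on exactly the same algebraic observation as the paper: under the hypotheses of Theorem~\ref{easy} one has $[\hat X,\hat Y]\in\fraku(m)$, i.e.\ its $\fraku(n)$-component vanishes. The paper then concludes in one line by integrability: the span $\frakm'+[\frakm',\frakm']$ is a Lie subalgebra, so it generates an integrable distribution in $\tilde\pi^{-1}(S)\subset U(n+m)$; since the only vertical direction $[\hat X,\hat Y]$ lies in $\fraku(m)$, passing to the quotient by $U(m)$ produces an integrable \emph{horizontal} distribution in $\pi^{-1}(S)$, which is precisely flatness.

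Your route packages the same fact through the curvature formula $\Omega_o=-\mathrm{pr}_{\fraku(n)}[\cdot,\cdot]$ for the canonical connection and then explicitly invokes homogeneity via the subgroup $G'$ to propagate the pointwise vanishing over all of $S$. This is equivalent---vanishing curvature and integrability of the horizontal distribution are the same thing---but your version makes the homogeneous-bundle machinery explicit where the paper leaves it implicit in the phrase ``the distribution induced from $\mathrm{Span}_\bbr\{\hat X,\hat Y,[\hat X,\hat Y]\}$ is integrable.'' Your explicit $G'$-translation argument is a cleaner justification of why the conclusion holds away from the origin than the paper's terse proof provides; conversely, the paper's integrability phrasing avoids having to state (and normalize) the curvature formula. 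Either way, the content is the single line $[\hat X,\hat Y]\in\fraku(m)$ from the proof of Theorem~\ref{easy}.
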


\begin{proof}
By a left translation, without loss of generality, assume that $S$ passes through the origin of the affine symmetric space $G_{n,m}.$

Consider the bundle
$U(n)\times U(m) \ra U(n+m) \stackrel{\tilde{\pi}}\lra G_{n,m}.$
Then $S$ induces a bundle
$U(n)\times U(m) \ra \tilde{\pi}^{-1}(S) \ra S$.
Totally geodesic condition says that the distribution induced from
$\text{Span}_{\bbr} \{ \hat{X}, \hat{Y}, [\hat{X},\hat{Y}] \}$ is integrable.
Since $[\hat{X},\hat{Y}]$  is contained in the Lie algebra   $ {\mathfrak u}(m) $ of $U(m)$ from the proof of Theorem \ref{easy},  the conclusion is obtained.
\end{proof}

\bigskip

\begin{theorem}
\label{easy_generalizion}
\label{geod-cond-sphere}
Given $X \in U_{m,n}(\bbc)$ and the natural fibration
$
 U(n) \times U(m) \ra U(n+m) \stackrel{\tilde{\pi}}\lra G_{n,m}(\bbc),
$
consider the 2-dimensional subspace $\frakm' = \text{Span}_{\bbr} \{ \hat{X}, \widehat{iX}\} $. Then,
\begin{enumerate}
\item
$\frakm'$ gives rise to a complete totally geodesic  surface $S$ in 
$G_{n,m}(\bbc),$
\item
$\frakm'$ induces a $U(1)$-subbundle of a bundle 
$$U(n) \times U(m) \ra \tilde{\pi}^{-1}(S) \ra S,$$
which is an immersion of the bundle
$$
  S\big( U(1) \times U(1) \big)
  \ra
  SU(1+1)
  \ra
  SU(1+1) / S\big(U(1) \times U(1))
$$
into 
$$U(n) \times U(m) \ra U(n+m) \stackrel{\tilde{\pi}}\lra G_{n,m},$$
such that it is isomorphic to
the Hopf bundle $S^1 \ra S^3 \ra S^2,$
\item
the immersion is conformal, and isometric in case of $n=m.$ In fact, 
$$ \big| \tilde{f}_{*}v \big| = \sqrt{\tfrac{2n}{n+m}} \, |v|$$ 
under the expression $\tilde{f}: SU(2) \ra U(n+m)$ for the immersion.
\end{enumerate}
\end{theorem}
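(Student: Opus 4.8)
The plan is to derive all three assertions from a single Lie group homomorphism $\tilde f\colon SU(2)\to U(n+m)$. First I would normalise so that $X^{*}X=I_{n}$: replacing $X$ by $X/\sqrt{\lambda}$ changes neither $\frakm'$ (which depends on $X$ only up to a nonzero complex scalar) nor the surface $S$, so it changes nothing in the statement, and it makes the formulas clean. Write $P:=XX^{*}$; since $X^{*}X=I_{n}$, $P$ is the orthogonal projection of $\bbc^{m}$ onto the column space of $X$, with $PX=X$, $X^{*}P=X^{*}$ and $\mathrm{Tr}\,P=n$.

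Next I would record the brackets. A direct block computation, using these identities for $P$, gives
$$
[\hat X,\widehat{iX}]=2H_{0},\qquad
H_{0}:=\left[\begin{array}{cc}-iI_{n}&0\\0&iP\end{array}\right]\in\frakh,
$$
and then $[H_{0},\hat X]=2\,\widehat{iX}$ and $[H_{0},\widehat{iX}]=-2\,\hat X$. In particular $[[\frakm',\frakm'],\frakm']\subset\frakm'$, and $\frakm'$ is $2$-dimensional since $X\neq0$, so Proposition~\ref{affine} proves (1). Moreover $\frakg':=\frakm'\oplus\bbr H_{0}$ is a $3$-dimensional subalgebra of $\mathfrak u(n+m)$, and comparing with the $\mathfrak{su}(2)$-brackets $[E_{1},E_{2}]=2E_{3}$, $[E_{3},E_{1}]=2E_{2}$, $[E_{3},E_{2}]=-2E_{1}$ shows that $E_{1}\mapsto\hat X$, $E_{2}\mapsto\widehat{iX}$, $E_{3}\mapsto H_{0}$ is a Lie algebra isomorphism $\mathfrak{su}(2)\to\frakg'$. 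Since $SU(2)$ is simply connected this integrates to a homomorphism $\tilde f\colon SU(2)\to U(n+m)$ with $\tilde f_{*}$ equal to that isomorphism; hence $\tilde f$ is an immersion, and it is injective because the only nontrivial candidate for its kernel, $-I_{2}$, has $\tilde f(-I_{2})=\mathrm{diag}(-I_{n},\,I_{m}-2P)\neq I_{n+m}$.

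Now I would identify the bundle. After a left translation assume $S$ passes through the origin $o$, and let $G':=\tilde f(SU(2))$, the connected subgroup with Lie algebra $\frakg'=\frakm'+[\frakm',\frakm']$; by the standard theory of affine symmetric spaces (Proposition~\ref{affine} and \cite{KN}) this is the transvection group of $S$, so $\tilde\pi(G')=G'\cdot o=S$ and $G'\subset\tilde\pi^{-1}(S)$. Writing a general element of $SU(2)$ as $\exp(sE_{1}+tE_{2})\exp(uE_{3})$ and computing $\tilde f$ of it explicitly, one sees that its off-diagonal blocks are scalar multiples of $X$ and $X^{*}$ that vanish exactly when $\exp(sE_{1}+tE_{2})=\pm I_{2}$, i.e.\ exactly when $h\in S(U(1)\times U(1))$; hence
$$
G'\cap\big(U(n)\times U(m)\big)=\tilde f\big(S(U(1)\times U(1))\big)=\exp(\bbr H_{0})\cong U(1).
$$
Consequently the circle $\exp(\bbr H_{0})\subset U(n)\times U(m)$ acts freely on $G'$ on the right with orbits equal to the fibres of $\tilde\pi|_{G'}$, so $\exp(\bbr H_{0})\to G'\to S$ is a principal $U(1)$-subbundle of $U(n)\times U(m)\to\tilde\pi^{-1}(S)\to S$, and $\tilde f$, together with the map it induces on base spaces, is an isomorphism of it onto $S(U(1)\times U(1))\to SU(2)\to SU(2)/S(U(1)\times U(1))$; by Section~\ref{hopf} the latter is isomorphic as a principal bundle to $S^{1}\to SU(2)\to\bbc P^{1}$, i.e.\ to the Hopf bundle $S^{1}\to S^{3}\to S^{2}$. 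This gives (2).

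Finally, for (3), the metric on $U(n+m)$ is $\langle A,B\rangle=\tfrac1{n+m}\mathrm{Re}\,\mathrm{Tr}(A^{*}B)$, so trace computations using $\mathrm{Tr}(X^{*}X)=\mathrm{Tr}(XX^{*})=\mathrm{Tr}\,P=n$ give $\langle\hat X,\hat X\rangle=\langle\widehat{iX},\widehat{iX}\rangle=\langle H_{0},H_{0}\rangle=\tfrac{2n}{n+m}$ and $\langle\hat X,\widehat{iX}\rangle=\langle\hat X,H_{0}\rangle=\langle\widehat{iX},H_{0}\rangle=0$. Thus $\tilde f_{*}$ carries the orthonormal basis $E_{1},E_{2},E_{3}$ of $\mathfrak{su}(2)$ to an orthogonal triple each of norm $\sqrt{2n/(n+m)}$, so $\tilde f_{*}$ at the identity is $\sqrt{2n/(n+m)}$ times a linear isometry; since $\tilde f$ is a homomorphism and both metrics are left-invariant, $|\tilde f_{*}v|=\sqrt{2n/(n+m)}\,|v|$ for every tangent vector $v$, whence $\tilde f$ is conformal, and isometric precisely when $n=m$. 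The bracket and trace bookkeeping is routine; the step that needs real care is the identity $G'\cap(U(n)\times U(m))=\exp(\bbr H_{0})$, which is what pins down that the fibre circle is exactly $\exp(\bbr H_{0})$ and hence that the quotient of $G'$ by it is genuinely $S$ and the reduction is the Hopf bundle rather than a further quotient of it. I expect this to be the main obstacle.
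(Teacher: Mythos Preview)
Your proof is correct and follows essentially the same approach as the paper: compute the brackets among $\hat X$, $\widehat{iX}$ and their commutator to exhibit a copy of $\mathfrak{su}(2)$, integrate to a Lie group homomorphism $\tilde f\colon SU(2)\to U(n+m)$ via simple connectedness, and then read off the bundle structure and the conformal factor from trace computations. The only cosmetic difference is that you normalise to $X^{*}X=I_{n}$ at the outset, whereas the paper keeps $\lambda$ and absorbs it into the scaling $E_{1}\mapsto\tfrac{1}{\sqrt\lambda}\hat X$, $E_{2}\mapsto\tfrac{1}{\sqrt\lambda}\widehat{iX}$, $E_{3}\mapsto\tfrac{1}{\lambda}K$; the resulting $\tilde f$ is the same map. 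Your treatment is in fact a bit more careful than the paper's in two places: you verify injectivity of $\tilde f$ by checking $\tilde f(-I_{2})\neq I_{n+m}$, and you explicitly establish $G'\cap\big(U(n)\times U(m)\big)=\exp(\bbr H_{0})$ via the Cartan-type factorisation $SU(2)=\exp(\text{Span}\{E_{1},E_{2}\})\cdot S(U(1)\times U(1))$, whereas the paper simply asserts that $\tilde f$ is a monomorphism and a bundle map.
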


\begin{proof}
From Lemma \ref{lambda}, let $X^* \, X = \lambda I_n$ for some $\lambda>0$.

By a left translation, without loss of generality, assume that $S$ passes through the origin of the affine symmetric space $G_{n,m}.$

Note that,
for
$
  K =
  \left[
          \begin{array}{cc}
            - i \lambda I_n & 0 \\ 0 & iXX^{*}
          \end{array}
  \right]\
  \!\in {\mathfrak {u}}(n) \times {\mathfrak {u}}(m),
$
$$
  [\hat{X},\widehat{iX}]=2 K,  \quad
  [K,\hat{X}]=2 \lambda \widehat{iX},  \quad
  [K,\widehat{iX}]=-2 \lambda \hat{X},
$$
which implies $[[\frakm',\frakm'],\frakm'] \subset \frakm'$ and the conclusion (1). 

Consider an orthonormal basis of
 ${\mathfrak {su}}(1+1)$:
$$
  E_1 =
  \left[
          \begin{array}{cc}
            0 & -1 \\ 1 & 0
          \end{array}
  \right]\ ,
  \quad
  E_2 =
  \left[
          \begin{array}{cc}
            0 & i \\ i & 0
          \end{array}
  \right]\ ,
  \quad
  E_3 =
  \left[
          \begin{array}{cc}
            -i & 0 \\ 0 & i
          \end{array}
  \right]\ ,
$$
and a Lie algbra monomorphism
$f : {\mathfrak su}(1+1) \ra {\mathfrak u}(n+m)$, given by
$$
  f(aE_1+bE_2+cE_3)
  = \frac{a}{\sqrt{\lambda}} \hat{X} 
    + \frac{b}{\sqrt{\lambda}}  \widehat{iX} 
    + \frac{c}{\lambda}  K  
$$
\noindent
for $a,b,c \in \bbr,$ from
$$[E_1,E_2]=2 E_3, \quad [E_3,E_1]=2E_2, \quad [E_3,E_2]=-2E_1.$$
For any $\theta \in \bbr,$ 
$$
  e^{\theta E_3} = 
  \left[
          \begin{array}{cc}
            e^{-i \theta} & 0 \\ 0 & e^{i \theta}
          \end{array}
  \right]\
  \in S \big( U(1) \times U(1) \big).  
$$ 
Thus $f$ will induce a Lie group monomorphism
$\tilde{f} : SU(1+1) \ra U(n+m)$ with
$
  \tilde{f}\Big({S \big( U(1) \times U(1) \big)}\Big)
  \subset
  U(n) \times U(m)
$
since $SU(2)$ is simply connected and
$S\big(U(1) \times U(1)\big)$ is connected.
Furthermore, it is the bundle map from
$$
  S\big( U(1) \times U(1) \big)
  \ra
  SU(1+1)
  \ra
  G_{1,1}= SU(1+1) / S\big(U(1) \times U(1))
$$
to
$$U(n) \times U(m) \ra U(n+m) \stackrel{\tilde{\pi}}\ra G_{n,m},$$
so the connected component of the integral manifold of the distribution induced by 
$\text{Span}_{\bbr}\{K,\hat{X}, \widehat{iX}\},$
which is the image of $\tilde{f},$
shows (2).

Note that 
$
 \{
   \frac{1}{\sqrt{\lambda}} \hat{X} , 
   \frac{1}{\sqrt{\lambda}}  \widehat{iX} ,
   \frac{1}{\lambda}  K  
 \}
$
is an orthogonal basis of the image of $\tilde{f}$ such that
$$
  \sqrt{\tfrac{2n}{n+m}} 
  = \Big| \tfrac{1}{\sqrt{\lambda}} \hat{X} \Big| 
  = \Big| \tfrac{1}{\sqrt{\lambda}}  \widehat{iX} \Big|
  = \Big| \tfrac{1}{\lambda}  K  \Big|,
$$
which shows (3).
\end{proof}

\bigskip
\begin{remark} \label{fiber}
 Let $\hat{\theta} = \tfrac{\theta}{\lambda}.$
 Then, for $\Phi = -E_3,$
  $$
   \tilde{f}(e^{\theta \Phi})
   = \tilde{f}(e^{-\theta E_3}) 
   = e^{-\hat{\theta} K}
   =
   \left[
          \begin{array}{cc}
            e^{i \theta} I_n & 0 \\ 
            0 & I_m + \tfrac{e^{-i \theta} -1}{\lambda} XX^{*}
          \end{array}
   \right]\ 
 $$
 from
 $$
    (-i \hat{\theta} XX^{*})^{j}
    = \Big(\tfrac{-i\theta}{\lambda}\Big)^{j} X(X^{*}X)^{j-1}X^{*}
    = \tfrac{(-i \theta)^{j}}{\lambda} XX^{*}
 $$
 for $j=1,2, \cdots .$
 Furthermore,
 \begin{align*}
   &\Big(I_m + \tfrac{e^{-i \theta} -1}{\lambda} XX^{*}\Big)
    \Big(I_m + \tfrac{e^{-i \phi} -1}{\lambda} XX^{*}\Big)  \\
   &=
    I_m + \tfrac{e^{-i \theta} + e^{-i \phi} -2}{\lambda} XX^{*}
    + \tfrac{e^{-i(\theta +\phi)}-e^{-i \theta}-e^{-i \phi} +1}{\lambda ^2}
       X(X^{*}X)X^{*} \\
   &= 
    I_m + \tfrac{e^{-i(\theta +\phi)} -1}{\lambda} XX^{*},    
 \end{align*}
 from which it is also obtained that
 $$
    I_m =
    \Big(I_m + \tfrac{e^{-i \theta} -1}{\lambda} XX^{*}\Big)
    \Big(I_m + \tfrac{e^{-i \theta} -1}{\lambda} XX^{*}\Big)^{*} .
 $$
\end{remark}

\bigskip
We return to the bundle
$U(n) \ra U(n+m)/U(m) \stackrel{\pi}\lra G_{n,m}$.
In fact, Remark \ref{fiber} implies that the immersed $U(1)$-subbundle, which is the image of $\tilde{f},$ gives two $U(1)$-bundles, one of which is an immersed $U(1)$-subbundle in the bundle
$U(n) \ra U(n+m)/U(m) \stackrel{\pi}\lra G_{n,m}$
and the other one is an immersed $U(1)$-subbundle in the bundle
$U(m) \ra U(n+m)/U(n) \stackrel{\hat{\pi}}\lra G_{n,m}.$

\bigskip
\begin{theorem}
\label{thm-sphere}
Assume the same condition for  a complete totally geodesic  surface
$S$  of either Theorem \ref{easy} or Theorem ~\ref{easy_generalizion},
and consider the immersed bundle
$U(n) \ra \pi^{-1}(S) \stackrel{\pi}{\rightarrow} S$
in the bundle
$U(n) \ra U(n+m)/U(m) \stackrel{\pi} \ra G_{n,m}.$ 
Let $\gamma$ be a piecewise smooth, simple closed curve on $S$.
Then the holonomy displacement along $\gamma$,
$$\wt\gamma(1)= \wt\gamma(0) \cdot V(\gamma),$$
is given by the right action of
$$
  V(\gamma)=e^{i \theta} I_n \quad \text{or} \quad e^{0i} I_n \in U(n),
$$
depending on whether the immersed bundle is flat or not,
where $A(\gamma)$ is the area of the region on the surface $S$ surrounded
by $\gamma$ and $\theta= 2 \cdot \tfrac{n+m}{2n} A(\gamma) .$
Especially, $\theta = 2\cdot A(\gamma)$ in case of $n=m.$
\end{theorem}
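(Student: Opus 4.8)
The plan is to treat separately the two families of surfaces $S$ allowed in the hypothesis, since they yield the two alternatives in the conclusion; because the holonomy is invariant under the isometries of $G_{n,m}$, I may assume by a left translation that $S$ passes through the origin of the affine symmetric space $G_{n,m}$. Suppose first that $S$ comes from Theorem \ref{easy}, so $\frakm'=\mathrm{Span}_{\bbr}\{\hat X,\hat Y\}$ with $X^{*}Y=\mu I_n$, $\mu\in\bbr$ and $Y\in U_{m,n}(\bbc)$. By Theorem \ref{geod-cond-sphere1} the immersed bundle $U(n)\ra\pi^{-1}(S)\ra S$ is flat; concretely, over $S$ the horizontal distribution of $U(n+m)\ra G_{n,m}$ tangent to $S$ is spanned infinitesimally by $\hat X,\hat Y$, and $[\hat X,\hat Y]\in\fraku(m)$ (proof of Theorem \ref{easy}), so on passing to $U(n+m)/U(m)$ the $2$-plane field over $S$ becomes involutive, i.e.\ the $U(n)$-connection on $\pi^{-1}(S)$ has vanishing curvature. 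Since $\gamma$ bounds a region $D$ in $S$, the restriction of this flat bundle to the disk $D$ is trivial, so parallel transport around $\gamma=\partial D$ is the identity and $\wt\gamma(1)=\wt\gamma(0)$, i.e.\ $V(\gamma)=e^{0i}I_n$.

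Now suppose $S$ comes from Theorem \ref{easy_generalizion}, so $\frakm'=\mathrm{Span}_{\bbr}\{\hat X,\widehat{iX}\}$ and $[\hat X,\widehat{iX}]=2K$ with $K=\diag(-i\lambda I_n,iXX^{*})$, $\lambda>0$; since the $\fraku(n)$-component $-2i\lambda I_n$ of $K$ does not vanish, after dividing by $U(m)$ the horizontal $2$-plane field over $S$ is not involutive, so the immersed bundle is \emph{not} flat. The idea is to confine the computation to the immersed Hopf $U(1)$-subbundle produced by Theorem \ref{easy_generalizion}(2) and Remark \ref{fiber}. Composing $\tilde f:SU(2)\ra U(n+m)$ with the projection $U(n+m)\ra U(n+m)/U(m)$ gives an injective, connection-preserving bundle immersion of $S^1\ra SU(2)\ra G_{1,1}$ onto that $U(1)$-subbundle of $U(n)\ra\pi^{-1}(S)\ra S$, its base map $G_{1,1}\ra S$ being a diffeomorphism: it carries $L_{*}\mathrm{Span}\{E_1,E_2\}$ into $L_{*}\mathrm{Span}\{\hat X,\widehat{iX}\}\subset L_{*}\frakm$, horizontal for $U(n)\ra U(n+m)/U(m)\ra G_{n,m}$, and the fiber circle $S(U(1)\times U(1))$ onto the central circle $\{e^{i\theta}I_n\}\subset U(n)$ by Remark \ref{fiber}. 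Therefore the horizontal space of the $U(1)$-subbundle lies inside that of the ambient $U(n)$-bundle, and uniqueness of horizontal lifts forces $\wt\gamma$ to remain on the subbundle once $\wt\gamma(0)$ is chosen there; thus $\wt\gamma$ is the $\tilde f$-image of the horizontal lift in $SU(2)$ of the curve $\gamma'$ on $G_{1,1}$ corresponding to $\gamma$.

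It then remains to compute the holonomy of $S^1\ra SU(2)\ra G_{1,1}$ along $\gamma'$ and transport it by $\tilde f$. Through the bundle isomorphism covering the conformal map $h:G_{1,1}\ra\bbc P^{1}$ of \S\ref{hopf}, Theorem \ref{area-u2} gives this holonomy as $e^{2A_{G_{1,1}}(\gamma')\Phi}$, $\Phi=-E_3$, where $A_{G_{1,1}}$ denotes area in the metric of $G_{1,1}$; and since $\tilde f$, hence also the base diffeomorphism $G_{1,1}\ra S$, scales lengths by the factor $\sqrt{2n/(n+m)}$ (Theorem \ref{easy_generalizion}(3)), we get $A_{G_{1,1}}(\gamma')=\tfrac{n+m}{2n}A(\gamma)$, so the holonomy equals $e^{\theta\Phi}$ with $\theta=2\cdot\tfrac{n+m}{2n}A(\gamma)$. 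Finally $\tilde f(e^{\theta\Phi})=\diag\!\big(e^{i\theta}I_n,\ I_m+\tfrac{e^{-i\theta}-1}{\lambda}XX^{*}\big)$ by Remark \ref{fiber}, and its action on $U(n+m)/U(m)$ is right multiplication by $e^{i\theta}I_n\in U(n)$; hence $\wt\gamma(1)=\wt\gamma(0)\cdot(e^{i\theta}I_n)$, with $\theta=2A(\gamma)$ when $n=m$ (the two choices of enclosed region on the sphere $S$ give $\theta$ and $2\pi-\theta$, consistent since $\tfrac{n+m}{n}\,\mathrm{area}(S)=2\pi$). The step I expect to be the main obstacle is the previous paragraph's claim that the $U(n)$-horizontal lift of $\gamma$ cannot leave the immersed $U(1)$-subbundle — this needs a careful comparison of the horizontal distribution of $U(n+m)\ra G_{n,m}$, its descent to $U(n+m)/U(m)$, and the image of $\tilde f$ — together with the bookkeeping of the two conformal rescalings (that of $\tilde f$ and, hidden in Theorem \ref{area-u2}, that of $h$) that produces the precise constant $\tfrac{n+m}{n}$.
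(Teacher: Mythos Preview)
Your proposal is correct and follows essentially the same route as the paper: the flat case (Theorem~\ref{easy}) is disposed of via Theorem~\ref{geod-cond-sphere1}, and the non-flat case (Theorem~\ref{easy_generalizion}) is handled by pulling back along the conformal immersion $\tilde f:SU(2)\to U(n+m)$, applying Theorem~\ref{area-u2} on $G_{1,1}$, rescaling areas by the factor $\tfrac{n+m}{2n}$ from Theorem~\ref{easy_generalizion}(3), and reading off the $U(n)$-component of $\tilde f(e^{\theta\Phi})$ via Remark~\ref{fiber}. Your added justification that the $U(n)$-horizontal lift of $\gamma$ remains in the image of $\tilde f$ (because $\tilde f_*$ sends $\mathrm{Span}\{E_1,E_2\}$ into $\frakm$) makes explicit a step the paper takes for granted, but the overall architecture is the same.
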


\begin{proof}
If the immersed bundle is flat, then it is obvious that the holonomy displacement is trivial. \\
Assume the condition of Theorem ~\ref{easy_generalizion} for the immersed $U(1)$-subbundle, which is the image of $\tilde{f}.$
Consider the induced map $\hat{f}: B \ra S \subset G_{m,n}$ between base spaces from the bundle map $\tilde{f}: SU(2) \ra \text{Im}(\tilde{f}) \subset U(n+m),$ which is a monomorphism, where $B = SU(2)/S(U(1) \times U(1)).$
Let 
$\alpha = \sqrt{\tfrac{2n}{n+m}},$
$
 \theta 
 = 2\cdot \tfrac{n+m}{2n} A(\gamma) 
 = \tfrac{\alpha ^{-2}}{8} A(\gamma)
$ 
and 
$\hat{\theta} = \tfrac{\theta}{\lambda}.$ 
The Theorem ~\ref{easy_generalizion}, Theorem \ref{area-u2} and 
Remark \ref{fiber} say that
the holonomy displacement of $\gamma$ in the bundle 
$U(n) \times U(m) \ra \pi^{-1}(S) \stackrel{\pi}{\rightarrow} S,$
which is immersed in the bundle
$U(n) \times U(m) \ra U(n+m) \stackrel{\pi} \ra G_{n,m},$
is given by the right action of
\begin{align*}
  V(\gamma) 
  &= \tilde{f} \big( V(\hat{f}^{-1} \circ \gamma)  \big) \\
  &= \tilde{f} 
       \big( e^{2 \cdot A(\hat{f}^{-1} \circ \gamma) \Phi} \big)  \\
  &= \tilde{f} \big( e^{\theta \Phi} \big)  \\
  &=    \left[
          \begin{array}{cc}
            e^{i \theta} I_n & 0 \\ 
            0 & I_m + \tfrac{e^{-i \theta} -1}{\lambda} XX^{*}
          \end{array}
        \right]\ .
\end{align*}
Thus in the bundle $U(n) \ra \pi^{-1}(S) \stackrel{\pi}{\rightarrow} S,$ which is immersed in the bundle 
$U(n) \ra U(n+m)/U(m) \stackrel{\pi} \ra G_{n,m},$
the holonomy displacement is given by the right action of
$$
  V(\gamma) = e^{i \theta} I_n .
$$
\end{proof}

\bigskip

\begin{remark}
For $n=1$, we have the following Hopf bundle
 $S^1\ra S^{2m+1} \ra \bbc P^m, $ where $\bbc P^m$ is given by the quotient metric, so the projection is a Riemannian submersion.
Let $S$ be a complete totally geodesic
surface in $\bbc P^m$ 
and  $\gamma$ be a piecewise smooth, simple closed curve on $S$.
Identify $\bbc ^m \cong M_{m,1}(\bbc).$
If $S$ is induced by $\text{Span} \{ v, w \} \subset \bbc ^m $ with
$\text{Im}h_{\bbc ^m}(v,w)=0,$ then the holonomy displacement along $\gamma$ is trivial.
See Corollary \ref{geod-cond-cpn} and Theorem \ref{geod-cond-sphere1}.
If $S$ is induced by a two dimensional subspace with complex structure in
$\bbc ^m,$
then the holonomy displacement depends not only on the area of the region surrounded by $\gamma$ but also on $m$ unless $m=1.$ In case of $m=1,$ here, $\bbc P^m$ is isometric to $S^2 \Big( \tfrac{1}{2} \Big).$ Refer to the map $h$ defined in Section \ref{hopf}.
\end{remark}

\medskip
\begin{remark}
Let $U(m) \ra U(n+m)/U(n) \stackrel{\hat{\pi}} \ra G_{n,m}$ be the natural
fibration.
Assume the same condition for  a complete totally geodesic  surface
$S$  of  Theorem ~\ref{easy_generalizion},
and consider the bundle
$U(m) \ra \hat{\pi}^{-1}(S) \stackrel{\hat{\pi}}{\rightarrow} S$. Let $\gamma$ be a piecewise smooth, simple closed curve on $S$.
Then the holonomy displacement along $\gamma$ is given by the right action of
$$
V(\gamma)=I_m + \tfrac{e^{-i \theta} -1}{\lambda} XX^{*} \in U(m),
$$
which depends on $X,$ not only on $n$ and $m,$
where $\theta= 2 \cdot \tfrac{n+m}{2n} A(\gamma).$
\end{remark}

\bibliographystyle{amsplain}

\end{document}